\newcommand{\ud}[0]{\,\mathrm{d}}
\newcommand{\eps}[0]{\varepsilon}
\newcommand{\abs}[1]{|#1|}
\newcommand{\Norm}[2]{\|#1\|_{#2}}
\newcommand{\ave}[1]{\langle #1\rangle}
\newcommand{\BMO}[0]{\operatorname{BMO}}
\newcommand{\RBMO}[0]{\operatorname{RBMO}}
\newcommand{\loc}[0]{\operatorname{loc}}
\newcommand{\R}{\mathbb{R}}
\newcommand{\N}{\mathbb{N}}
\newcommand{\Z}{\mathbb{Z}}
\newcommand{\ontop}[2]{\begin{smallmatrix} #1 \\ #2 \end{smallmatrix}}
\numberwithin{equation}{section}
  \let\c@equation\c@subsection
\theoremstyle{plain}
\newtheorem{lemma}[subsection]{Lemma}
\newtheorem{corollary}[subsection]{Corollary}
\newtheorem{proposition}[subsection]{Proposition}
\theoremstyle{definition}
\newtheorem{definition}[subsection]{Definition}
\theoremstyle{remark}
\newtheorem{remark}[subsection]{Remark}
\title[Non-homogeneous analysis on metric spaces]{A framework for non-homogeneous analysis on metric spaces, and the RBMO space of Tolsa}
\author{Tuomas Hyt\"onen}
\address{Department of Mathematics and Statistics\\ University of Helsinki\\ Gustaf H\"allstr\"omin katu 2b\\ FI-00014 Helsinki\\ Finland}
\email{tuomas.hytonen@helsinki.fi}
\begin{document}

\maketitle

\begin{abstract}
A new class of metric measure spaces is introduced and studied. This class generalises the well-established doubling metric measure spaces as well as the spaces $(\R^n,\mu)$ with $\mu(B(x,r))\leq Cr^d$, in which non-doubling harmonic analysis has recently been developed. It seems to be a promising framework for an abstract extension of this theory. Tolsa's space of regularised BMO functions is defined in this new setting, and the John--Nirenberg inequality is proven.\\

\noindent \textsc{2010 Mathematics Subject Classification.} 30L99, 42B35.
\end{abstract}

\section{Introduction}

Spaces of homogeneous type --- (quasi-)metric spaces equipped with a so-called doubling measure --- were introduced by Coifman and Weiss \cite{CW} as a general framework in which several results from real and harmonic analysis on Euclidean spaces have their natural extension. This applies in particular to the Calder\'on--Zygmund theory of singular integrals in $L^p$, $1<p<\infty$, and in the appropriate end-point spaces of this scale. If one is willing to assume somewhat more (in particular, versions of the Poincar\'e inequality), then one can even incorporate results dealing  with first order differential calculus in a suitable generalised sense. %including  a theory of Sobolev-type spaces $W^{1,p}$, variational problems, and quasiconformal mappings.
These last mentioned developments, in the setting of homogeneous spaces with some additional structure, are in the core of what is now commonly referred to as analysis on metric spaces (cf. \cite{Heinonen:book,Heinonen:survey}). %, whose timely significance is also indicated by its new mid-level subject classification number in the 2010 update of the MSC.

Meanwhile, recent developments in the Calder\'on--Zygmund theory (which one might think of as ``zeroth order calculus'', as only integrability and no differentiability of the functions on which one operates is considered) have shown that a number of interesting problems cannot be, and need not be, embedded into the homogeneous framework. A prime example is the question of $L^p$ boundedness of the Cauchy integral operator with respect to a measure without the doubling property \cite{NTV:Cauchy,Tolsa:Cauchy}, and more generally the new generation of Calder\'on--Zygmund operators modelled after it. Also the end-point spaces of the $L^p$ scale, and the related mapping properties of operators, have been successfully investigated in non-homogeneous situations. Some highlights of this theory, each building on the previous one, are the introduction of the regularised BMO space by Tolsa \cite{Tolsa:RBMO}, the proof of a non-homogeneous $Tb$ theorem by Nazarov, Treil and Volberg \cite{NTV:Tb}, and the solution of the Painlev\'e problem, again by Tolsa \cite{Tolsa:Painleve}.

Notwithstanding these impressive achievements, one should note that the non-homogeneous Calder\'on--Zygmund theory, as developed in most of the papers on the subject, is not in all respects a generalisation of the corresponding homogeneous theory. In fact, the typical setting there consists of $\R^n$ with a measure $\mu$ having the upper power bound $\mu(B(x,r))\leq Cr^d$ for some $d\in(0,n]$. So, first of all, it is not analysis on metric spaces, and even on $\R^n$, it deals with a class of measures which is different from, not more general than, the doubling measures. (A notable exception to the Euclidean restriction consists of the papers of Garc\'ia-Cuerva and Gatto \cite{GCG:fractional,GCG:CZO} and Gatto \cite{Gatto:09}, where some results concerning fractional, singular and hypersingular integrals on Lipschitz spaces are obtained in abstract metric spaces. A very recent work of Bramanti \cite{Bramanti} even deals with $L^p$ boundedness.
 However, when Garc\'ia-Cuerva and Gatto come to a limiting case of their results concerning the regularised BMO space, both \cite{GCG:fractional,GCG:CZO} restrict themselves to the original set-up of Tolsa on $\R^n$, and Bramanti's work completely bypasses the BMO aspects, which would be essential for obtaining $Tb$ theorems in full generality.)

The starting point of the present investigation is the following assertion by Nazarov, Treil and Volberg \cite[p.~153]{NTV:Tb}: ``The theory [of Calder\'on--Zygmund operators and $Tb$ theorems on non-homogeneous spaces] can be developed in an abstract metric space with a measure, but we will consider the interesting case for applications when our space is just a subset of $\R^N$.'' In this paper, I do not yet attempt a comprehensive justification of their claim; however, I propose a precise formulation of an abstract framework in which such a theory could be hoped for, and I take the first steps in its development by defining and investigating the regularised BMO space of Tolsa in this new setting. Garc\'ia-Cuerva and Gatto \cite{GCG:fractional} already pointed out that Tolsa's ``definition makes sense, in principle, in our general setting,'' but they did not comment on the possibility of also extending some of Tolsa's theorems concerning this space to metric spaces, which will be done here. This should also open the door for developing the results of \cite{GCG:fractional} in this wider generality.

The proposed framework is sufficiently general to include in a natural way both the abstract doubling metric measure spaces and the power-bounded measures on $\R^n$ which have been in the centre of much of today's non-doubling theory. In this sense it seems to be the ``right'' one. It does not, however, cover some other situations where non-doubling Calder\'on--Zygmund theory has been developed, such as the Gaussian measure spaces on $\R^n$ investigated by Mauceri and Meda~\cite{MaMe}.

The plan of this paper is as follows. The general framework for non-homogeneous analysis on metric spaces is set up in Section~\ref{sec:framework}. In Section~\ref{sec:differentiation}, a version of Lebesgue's differentiation theorem in this setting is obtained. Section~\ref{sec:RBMO} introduces the space $\RBMO(\mu)$ and Section~\ref{sec:RBMOaux} is concerned with some basic lemmas related to this space. The main result is the John--Nirenberg inequality proven in the final Section~\ref{sec:JN}.

As it turns out, it is possible to reasonably closely follow the original Euclidean arguments due to Tolsa \cite{Tolsa:RBMO} and reworked by Nazarov, Treil and Volberg \cite{NTV:Tb}, whose approach has been used as the primary model of the present one. There is, however, at least one place where a slight departure from their reasoning was necessary. In proving the John--Nirenberg inequality for the regularised BMO functions, both \cite{NTV:Tb} and \cite{Tolsa:RBMO} resort to the Besicovitch covering theorem, which is an essentially Euclidean device. In the abstract setting of present interest, there is not much more than the ``basic covering theorem'' \cite[Theorem 1.2]{Heinonen:book} available, and one has to survive with this weaker tool.

I will restrict myself to a metric space, although the results of this paper could be developed also with a quasi-metric satisfying only the weak triangle inequality $d(x,y)\leq K[d(x,z)+d(z,y)]$ involving a constant $K\geq 1$. The interested reader will easily realise how to modify the statements and proofs where necessary. They will not become more difficult, only somewhat more annoying.

\subsection{Notation} Following the usual practise in the area, a \emph{ball} indicates an open set $B=B(x,r)=\{y\in X:d(y,x)<r\}$ which is equipped with a fixed centre $x\in X$ and radius $r>0$, even though these need not be uniquely determined by $B$ as a set in general. Sometimes, the centre and radius of $B$ are denoted by $c_B$ and $r_B$, or by $c(B)$ and $r(B)$, depending on what seems convenient in a particular place. For $\alpha>0$ and $B=B(x,r)$, the notation $\alpha B:=B(x,\alpha r)$ stands for the concentric dilation of $B$.

Given a Borel measure $\mu$ on $(X,d)$, \emph{local integrability} will refer to integrability over all \emph{bounded} subsets of $X$. (Compactness will not play a r\^ole in the arguments, and no reference to it will be made.) All measures to be considered will be finite on such sets. For a function $f\in L^1_{\loc}(X,\mu)$, its average in a ball $B$ is denoted by
\begin{equation*}
  \ave{f}_B:=\fint_B f\ud\mu:=\frac{1}{\mu(B)}\int_B f\ud\mu.
\end{equation*}
The notation $f_B$, which is sometimes also used for $\ave{f}_B$, in this paper only indicates some number, which is related to $f$ and $B$ but need not be the same as the mentioned average value. A number of constants will be given special names, but otherwise the letter $C$ stands for a constant which only depends on the parameters of the space and never on the functions under consideration, but otherwise its value may be different at different occurences.

\subsubsection*{Acknowledgments}
A first version of the results of this paper was presented in the Analysis Seminar of Helsinki University of Technology on April 7, 2009. Juha Kinnunen encouraged me to write them down. The research was supported by the Academy of Finland, project 114374.

\section{Different notions of doubling}\label{sec:framework}

As it turns out, the ``non-doubling'' theory is in a sense more doubling than the doubling theory, in that the single classical doubling hypothesis will be replaced by a couple of other ones. In order to avoid confusion between the different hypotheses, slightly pedantic language will be employed.

\begin{definition}
A metric measure space $(X,d,\mu)$ is said to by \emph{measure doubling} if $\mu$ is a Borel measure on $X$ and there exists a constant $C_{\mu}$ such that
\begin{equation}\label{eq:measDoubling}
  0<\mu(2B)\leq C_{\mu}\mu(B)<\infty
\end{equation}
for all balls $B\subseteq X$.
\end{definition}

Measure doubling will not be assumed in this paper. The first condition replacing \eqref{eq:measDoubling} in the present investigation is also well known in analysis on metric spaces. Some relevant facts will be collected first for easy reference.

\begin{lemma}\label{lem:geomDoubling}
For a metric space $(X,d)$, the following conditions are equivalent.
\begin{enumerate}
  \item\label{it:halfCover} Any ball $B(x,r)\subseteq X$ can be covered by at most $N$ balls $B(x_i,r/2)$.
  \item\label{it:deltaCover} For every $\delta\in(0,1)$, any ball $B(x,r)\subseteq X$ can be covered by at most $N\delta^{-n}$ balls $B(x_i,\delta r)$.
  \item\label{it:deltaDisj} For every $\delta\in(0,1)$, any ball $B(x,r)\subseteq X$ can contain at most $N\delta^{-n}$ centres $y_j$ of disjoint balls $B(y_j,\delta r)$.
  \item\label{it:quarterDisj} Any ball $B(x,r)\subseteq X$ can contain at most $N$ centres $y_j$ of disjoint balls $B(y_j,r/4)$.
\end{enumerate}
\end{lemma}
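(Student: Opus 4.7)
The plan is to establish the equivalence by the cyclic chain
$(\ref{it:halfCover})\Rightarrow(\ref{it:deltaCover})\Rightarrow(\ref{it:deltaDisj})\Rightarrow(\ref{it:quarterDisj})\Rightarrow(\ref{it:halfCover})$,
each step being a standard covering- or packing-type manipulation. The only thing requiring care is the bookkeeping of constants: the numerical values of $N$ and $n$ will not be preserved along the chain, but they always remain finite quantities independent of the ball under consideration, which is what the statement demands.

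For $(\ref{it:halfCover})\Rightarrow(\ref{it:deltaCover})$, I would simply iterate the halving hypothesis: $k$ successive applications cover $B(x,r)$ by at most $N^k$ balls of radius $r/2^k$. Given $\delta\in(0,1)$, take the smallest integer $k$ with $2^{-k}\leq\delta$. Then $N^k\leq N\cdot(1/\delta)^{\log_2 N}$, so $(\ref{it:deltaCover})$ holds with exponent $n=\log_2 N$.

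For $(\ref{it:deltaCover})\Rightarrow(\ref{it:deltaDisj})$, suppose $y_j\in B(x,r)$ are centres of disjoint balls $B(y_j,\delta r)$. Apply $(\ref{it:deltaCover})$ with $\delta/2$ in place of $\delta$ to cover $B(x,r)$ by at most $N\cdot 2^n\delta^{-n}$ balls of radius $\delta r/2$. No two distinct centres $y_j,y_k$ can lie in the same such small cover ball, since then $d(y_j,y_k)<\delta r$ would force $B(y_j,\delta r)$ and $B(y_k,\delta r)$ to meet, contradicting disjointness. Hence the number of centres is at most $N\cdot 2^n\delta^{-n}$, and adjusting $N$ gives the claim.

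The implication $(\ref{it:deltaDisj})\Rightarrow(\ref{it:quarterDisj})$ is simply the specialisation $\delta=1/4$, which turns the bound $N\delta^{-n}$ into the constant $N\cdot 4^n$. The closing step $(\ref{it:quarterDisj})\Rightarrow(\ref{it:halfCover})$ is a standard maximal-packing argument: select a maximal subset $\{y_j\}\subset B(x,r)$ with pairwise distances at least $r/2$. The open balls $B(y_j,r/4)$ are then disjoint, since $d(y_j,y_k)\geq r/2=r/4+r/4$, so $(\ref{it:quarterDisj})$ bounds their number by $N$. By maximality, every $z\in B(x,r)$ must lie within $r/2$ of some $y_j$, and hence $\{B(y_j,r/2)\}$ is the required cover. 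There is no real obstacle along the way; the most conceptually illuminating step is this last one, which embodies the classical duality between coverings and packings in a metric space and explains why controlling disjoint balls of quarter radius already suffices to control covers by balls of half radius.
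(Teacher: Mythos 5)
Your proof is correct and follows essentially the same cyclic chain and the same arguments as the paper (iteration for $(\ref{it:halfCover})\Rightarrow(\ref{it:deltaCover})$, cover-versus-packing counting for $(\ref{it:deltaCover})\Rightarrow(\ref{it:deltaDisj})$, specialisation for $(\ref{it:deltaDisj})\Rightarrow(\ref{it:quarterDisj})$, and a maximal packing for $(\ref{it:quarterDisj})\Rightarrow(\ref{it:halfCover})$). The only cosmetic difference is in $(\ref{it:deltaCover})\Rightarrow(\ref{it:deltaDisj})$, where the paper covers by balls of radius $\delta r$ and notes that two centres in the same cover ball would both contain $x_i$, whereas you shrink to radius $\delta r/2$; both work, yours just costing a harmless factor $2^n$.
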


\begin{proof}
$\eqref{it:halfCover}\Rightarrow\eqref{it:deltaCover}$. Let $2^{-k}\leq\delta<2^{1-k}$ with $k\in\Z_+$. By iterating \eqref{it:halfCover}, it follows that $B(x,r)$ can be covered by at most $N^k$ balls $B(x_i,2^{-k}r)\subseteq B(x_i,\delta r)$, and here $N^k=2^{k\log_2 N}\leq(2\delta^{-1})^{\log_2 N}=N\delta^{-\log_2 N}$.

$\eqref{it:deltaCover}\Rightarrow\eqref{it:deltaDisj}$. Suppose that $y_j\in B(x,r)$, $j\in J$, are centres of disjoint balls $B(y_j,\delta r)$, and choose a cover of $B(x,r)$ consisting of balls $B(x_i,\delta r)$, $i\in I$, where $\abs{I}\leq N\delta^{-n}$. Then every $y_j$ belongs to some $B(x_i,\delta r)$, and no two $y_j\neq y_k$ can belong to the same $B(x_i,\delta r)$, for otherwise $x_i\in B(y_j,\delta r)\cap B(y_k,\delta r)=\varnothing$. Thus $\abs{J}\leq\abs{I}\leq N\delta^{-n}$.

$\eqref{it:deltaDisj}\Rightarrow\eqref{it:quarterDisj}$ is obvious.

$\eqref{it:quarterDisj}\Rightarrow\eqref{it:halfCover}$. Keep selecting disjoint balls $B(y_j,r/4)$ with $y_j\in B(x,r)$ as long as it is possible; the process will terminate after at most $N$ steps by assumption. Then every $y\in B(x,r)$ belongs to some $B(y_j,r/2)$, for otherwise the ball $B(y,r/4)$ could still have been chosen.
\end{proof}

\begin{definition}
A metric space $(X,d)$ is called \emph{geometrically doubling} if the equivalent conditions of Lemma~\ref{lem:geomDoubling} are satisfied.
\end{definition}

It is well known that measure doubling implies geometrical doubling; indeed, it is one of the first things pointed out by Coifman and Weiss in their discussion of spaces of homogeneous type \cite[p. 67]{CW}. Conversely, if $(X,d)$ is a complete, geometrically doubling metric space, then there exists a Borel measure $\mu$ on $X$ such that $(X,d,\mu)$ is measure doubling \cite{LS,VK,Wu}. However, the point of view taken in the present investigation is that the measure $\mu$ is given by a particular problem, and not something that one is free to choose or construct. So even if there exist some doubling measures on the metric space of interest, one might still have to work with a non-doubling one. This is, for example, manifestly the case in the analysis of non-doubling measures on $\R^n$.

\begin{lemma}\label{lem:countable}
In a geometrically doubling metric space, any disjoint collection of balls is at most countable.
\end{lemma}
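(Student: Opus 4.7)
The plan is to exhibit the given disjoint collection as a countable union of finite subcollections, where the quantization is by a pair of integers describing the scale of the radius and the scale of the distance from a fixed reference point.

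First I would fix an arbitrary point $x_0\in X$. Given a disjoint collection of balls $\{B(x_\alpha,r_\alpha)\}_{\alpha\in A}$, for each pair $(m,k)\in\Z^2$ I would define
\begin{equation*}
  A_{m,k}:=\{\alpha\in A:d(x_\alpha,x_0)<2^m,\ r_\alpha>2^k\}.
\end{equation*}
Since every $\alpha\in A$ has $d(x_\alpha,x_0)<\infty$ and $r_\alpha>0$, I can pick integers $m$ and $k$ with $d(x_\alpha,x_0)<2^m$ and $r_\alpha>2^k$, so $A=\bigcup_{(m,k)\in\Z^2}A_{m,k}$, a countable union.

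Next I would show that each $A_{m,k}$ is finite using condition \eqref{it:deltaDisj} of Lemma~\ref{lem:geomDoubling}. For $\alpha\in A_{m,k}$, the concentric balls $B(x_\alpha,2^k)\subseteq B(x_\alpha,r_\alpha)$ are still pairwise disjoint, and they all sit inside $B(x_0,2^m+2^k)$. Applying condition~\eqref{it:deltaDisj} to the ball $B(x_0,2^m+2^k)$ with $\delta=2^k/(2^m+2^k)\in(0,1)$ yields an explicit finite bound $N\delta^{-n}$ on $|A_{m,k}|$.

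Combining these two observations, $A$ is a countable union of finite sets and therefore countable. The only minor subtlety, which I would make sure to spell out, is that the indices $m$ and $k$ are allowed to range over all of $\Z$ (both positive and negative), since a priori neither the radii nor the distances from $x_0$ are bounded away from $0$ or from $\infty$; but $\Z^2$ is still countable, so the conclusion is unaffected.
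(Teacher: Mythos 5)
Your proof is correct and follows essentially the same route as the paper: fix a reference point, split the collection into countably many subfamilies according to an integer scale for the distance of the centre from $x_0$ and an integer scale for the radius, and use condition \eqref{it:deltaDisj} of Lemma~\ref{lem:geomDoubling} to see that each subfamily is finite. The only difference is notational (dyadic scales indexed by $\Z^2$ versus the paper's $B(x_0,k)$ and radii exceeding $j^{-1}$ with $j,k\in\Z_+$), and your explicit choice of $\delta=2^k/(2^m+2^k)$ is a valid instantiation of that condition.
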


\begin{proof}
Let $x_0\in X$ be a fixed reference point. By \eqref{it:deltaDisj} of Lemma~\ref{lem:geomDoubling}, any ball $B(x_0,k)$ contains at most finitely many centres of disjoint balls of radius bigger than a given $j^{-1}$. Since every ball has its centre in some $B(x_0,k)$ and radius bigger than some $j^{-1}$, where $j,k\in\Z_+$, the conclusion follows.
\end{proof}

It is finally time to specify the class of measures to be investigated.

\begin{definition}
A metric measure space $(X,d,\mu)$ is said to be \emph{upper doubling} if $\mu$ is a Borel measure on $X$ and there exists a \emph{dominating function} $\lambda:X\times\R_+\to\R_+$ and a constant $C_{\lambda}$ such that
\begin{equation*}
  r\mapsto\lambda(x,r)\text{ is non-decreasing},
\end{equation*}
\begin{equation*}
  \lambda(x,2r)\leq C_{\lambda}\lambda(x,r),
\end{equation*}
\begin{equation*}
  \mu(B(x,r))\leq\lambda(x,r)
\end{equation*}
for all $x\in X$ and $r>0$.
\end{definition}

For some time I thought that one would also need to assume something like $\lambda(y,r)\leq C\lambda(x,r)$ when $d(y,x)\leq r$ but this turned out, at least for the present purposes, to be unnecessary. The point is that, whether or not the mentioned domination holds, one can already estimate $\mu(B(y,r))$ by $\lambda(x,r)$ for $d(x,y)\leq r$ by the existing assumptions; indeed,
\begin{equation*}
  \mu(B(y,r))\leq\mu(B(x,2r))\leq\lambda(x,2r)\leq C_{\lambda}\lambda(x,r).
\end{equation*}

It is seen at once that measure doubling is a special case of upper doubling, where one can take the dominating function to be $\lambda(x,r)=\mu(B(x,r))$. On the other hand, much of today's non-doubling theory has been developed for a measure $\mu$ on $\R^n$ which is upper doubling with the dominating function $\lambda(x,r)=Cr^d$.

In contrast to measure doubling, both geometrical doubling and upper doubling are stable under restriction to subsets: if $(X,d,\mu)$ is geometrically doubling or upper doubling and $Y\subset X$, then so is $(Y,d|_Y,\mu|_Y)$. For geometrical doubling, this is most easily seen by using condition \eqref{it:deltaDisj} or \eqref{it:quarterDisj} of Lemma~\ref{lem:geomDoubling}. As for upper doubling, it is clear that the restriction $\lambda|_{Y\times\R_+}$ of the original dominating function works. 

Measure doubling, on the other hand, already fails for the Lebesgue measure on subsets of $\R^n$ with appropriate cusps. In a closed subset of a complete, measure doubling space, there always exists a doubling measure, but it may be different from the restriction of the original measure of interest to this subset. Such a restriction is still upper doubling, though.

\section{Doubling balls and differentiation}\label{sec:differentiation}

Even if the measure doubling condition~\eqref{eq:measDoubling} is not assumed uniformly for all balls, it makes sense to ask whether such an inequality is true for a given particular ball or not.

\begin{definition}
For $\alpha,\beta>1$, a ball $B\subseteq X$ is called $(\alpha,\beta)$-doubling if $\mu(\alpha B)\leq\beta\mu(B)$.
\end{definition}

Of course, measure doubling is precisely the requirement that every ball $B$ is $(2,\beta)$-doubling for some fixed $\beta$. But even the weaker notions of geometrical doubling and upper doubling ensure the abundance of both large and small doubling balls.

\begin{lemma}\label{lem:largeDoubling}
Let the metric measure space $(X,d,\mu)$ be upper doubling and $\beta>C_{\lambda}^{\log_2\alpha}$. Then for every ball $B\subseteq X$ there exists $j\in\N$ such that $\alpha^j B$ is $(\alpha,\beta)$-doubling.
\end{lemma}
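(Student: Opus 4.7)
The plan is to argue by contradiction: suppose that no dilate $\alpha^j B$ (with $j\in\N$, including $j=0$) is $(\alpha,\beta)$-doubling, which means that $\mu(\alpha^{j+1} B)>\beta\mu(\alpha^j B)$ for every $j\geq 0$. Writing $B=B(x,r)$, and first handling the case $\mu(B)>0$, iteration of this strict inequality yields
\begin{equation*}
  \mu(\alpha^j B)>\beta^j\mu(B)
\end{equation*}
for all $j\in\N$. I would then use the upper doubling hypothesis together with monotonicity of $\lambda(x,\cdot)$ to bound $\mu(\alpha^j B)$ from above: picking the smallest integer $k=\ceil{j\log_2\alpha}$ so that $2^k\geq\alpha^j$, I get
\begin{equation*}
  \mu(\alpha^j B)\leq\lambda(x,\alpha^j r)\leq\lambda(x,2^k r)\leq C_{\lambda}^k\lambda(x,r)\leq C_{\lambda}\cdot C_{\lambda}^{j\log_2\alpha}\lambda(x,r).
\end{equation*}
Combining the two bounds gives $\beta^j\mu(B)<C_{\lambda}\bigl(C_{\lambda}^{\log_2\alpha}\bigr)^j\lambda(x,r)$, and since the assumption $\beta>C_{\lambda}^{\log_2\alpha}$ means the ratio $\bigl(\beta/C_{\lambda}^{\log_2\alpha}\bigr)^j$ blows up, this is incompatible with the finiteness of $\lambda(x,r)$ and positivity of $\mu(B)$.

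The case $\mu(B)=0$ needs a brief comment. If $\mu(\alpha^j B)=0$ for every $j$, then $\mu(\alpha\cdot\alpha^j B)=0\leq\beta\cdot 0=\beta\mu(\alpha^j B)$ and every dilate is trivially $(\alpha,\beta)$-doubling, so one is done. Otherwise there is a smallest $j_0$ with $\mu(\alpha^{j_0}B)>0$, and repeating the argument above with $B$ replaced by $\alpha^{j_0}B$ (and $j$ replaced by $j-j_0$) produces the same contradiction.

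There is no real obstacle here; the only point requiring care is the bookkeeping that converts the dyadic doubling constant $C_{\lambda}$ into the correct exponent for the $\alpha$-adic dilation, which is precisely what produces the threshold $C_{\lambda}^{\log_2\alpha}$ in the hypothesis on $\beta$. The degenerate case $\mu(B)=0$ is handled by the simple reduction just described.
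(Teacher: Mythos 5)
Your proof is correct and follows essentially the same route as the paper: assume no dilate is $(\alpha,\beta)$-doubling, iterate the strict inequality, bound $\mu(\alpha^j B)$ above by $C_\lambda^{j\log_2\alpha+1}\lambda(c_B,r_B)$ via the dyadic doubling of $\lambda$, and let $j\to\infty$. The only (immaterial) difference is in the degenerate case: the paper deduces $\mu(B)=\mu(\alpha B)=0$ from the same limiting argument and concludes $B$ was doubling after all, whereas you split off the zero-measure case at the start; both dispositions are fine.
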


\begin{proof}
Assume contrary to the claim that none of the balls $\alpha^j B$, $j\in\N$, is $(\alpha,\beta)$-doubling, i.e., $\mu(\alpha^{j+1}B)>\beta\mu(\alpha^j B)$ for all $j\in\N$. It follows that
\begin{equation*}
\begin{split}
  \mu(B)
  &\leq\beta^{-1}\mu(\alpha B)\leq\ldots\leq\beta^{-j}\mu(\alpha^j B) \\
  &\leq\beta^{-j}\lambda(c_B,\alpha^j r_B)
  \leq\beta^{-j} C_{\lambda}^{j\log_2\alpha+1}\lambda(c_B,r_B)
  =C_{\lambda}\Big(\frac{C_{\lambda}^{\log_2\alpha}}{\beta}\Big)^j\lambda(c_B,r_B)
  \underset{j\to\infty}{\longrightarrow} 0.
\end{split}
\end{equation*}
Hence $\mu(B)=0$. But the same argument also holds with $\alpha B$ in place of $B$, leading to $\mu(\alpha B)=0$. Then $B$ is $(\alpha,\beta)$-doubling after all, which is a contradiction.
\end{proof}

\begin{lemma}\label{lem:smallDoubling}
Let $(X,d)$ be geometrically doubling and $\beta>\alpha^n$, where $n$ is as in condition \eqref{it:deltaDisj} of Lemma~\ref{lem:geomDoubling}. If $\mu$ is a Borel measure on $X$ which is finite on bounded sets, then for $\mu$-a.e. $x\in X$ there are arbitrarily small $(\alpha,\beta)$-doubling balls centred at $x$. In fact, their radius may be chosen to be of the form $\alpha^{-j}r$, $j\in\N$, for any preassigned number $r>0$.
\end{lemma}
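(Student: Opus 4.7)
The plan is to directly estimate the measure of the ``bad'' set where no $(\alpha,\beta)$-doubling ball of the form $B(x,\alpha^{-j}r)$ exists for all large $j$, by pitting the counting condition \eqref{it:deltaDisj} of Lemma~\ref{lem:geomDoubling} against the geometric decay of measure that successive failure of $(\alpha,\beta)$-doubling forces. Fix $r>0$. Because $\mu$ is finite on bounded sets, the exceptional set is contained in the countable union over $j_0,M\in\N$ of
\[
E_{j_0,M}:=\{x\in X : \mu(B(x,\alpha^{-j_0}r))\leq M,\ B(x,\alpha^{-j}r)\text{ is not }(\alpha,\beta)\text{-doubling for every }j\geq j_0\},
\]
so it suffices to prove that $E_{j_0,M}\cap B(x_0,R)$ is $\mu$-null for every ambient ball $B(x_0,R)$ and every $j_0,M\in\N$.

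The failure of $(\alpha,\beta)$-doubling for $B(x,\alpha^{-j}r)$ reads $\mu(B(x,\alpha^{-(j-1)}r))>\beta\mu(B(x,\alpha^{-j}r))$. For $x\in E_{j_0,M}$ and $j\geq j_0$ this iterates downward from scale $\alpha^{-j_0}r$ to the decisive decay estimate
\[
\mu(B(x,\alpha^{-j}r))\leq\beta^{-(j-j_0)}M.
\]

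To cover $E_{j_0,M}\cap B(x_0,R)$ at these small scales, fix once and for all $k\in\N$ with $\alpha^k\geq 2$, and let $j\geq j_0+k$ be large enough that $\alpha^{-j}r<R$. Extract a maximal disjoint family of balls $B(x_i,\alpha^{-j}r)$ with centres $x_i\in E_{j_0,M}\cap B(x_0,R)$; it is countable by Lemma~\ref{lem:countable}. By maximality every $y$ in the set is within distance $2\alpha^{-j}r\leq\alpha^{-(j-k)}r$ of some $x_i$, so $E_{j_0,M}\cap B(x_0,R)\subseteq\bigcup_i B(x_i,\alpha^{-(j-k)}r)$. Condition \eqref{it:deltaDisj} of Lemma~\ref{lem:geomDoubling} applied to $B(x_0,R)$ with $\delta=\alpha^{-j}r/R$ bounds the number of centres by $N(R/r)^n\alpha^{jn}$, while the decay estimate at scale $\alpha^{-(j-k)}r$ bounds each enlarged ball's measure by $\beta^{-(j-k-j_0)}M$. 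Combining gives a cover of total measure $\leq C(\alpha^n/\beta)^j$, which tends to zero by the hypothesis $\beta>\alpha^n$.

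The only mild subtlety I anticipate is that a maximal disjoint family of radius-$\rho$ balls naturally produces a cover by the $2$-enlargements, not by the $\alpha$-enlargements that the statement is phrased in; this is precisely what forces the harmless integer shift $k$ with $\alpha^k\geq 2$, which affects only an irrelevant multiplicative constant in $C$ and leaves the exponential ratio $\alpha^n/\beta<1$ unchanged. Everything else is an essentially routine maximality-plus-counting argument adapted to the geometrically doubling setting.
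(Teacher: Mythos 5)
Your proof is correct and follows essentially the same route as the paper's: both arguments iterate the failure of $(\alpha,\beta)$-doubling to get the geometric decay $\mu(B(x,\alpha^{-j}r))\lesssim\beta^{-j}$ on the bad set, cover that set at scale $\alpha^{-j}r$ by a maximal packing whose cardinality is controlled by condition \eqref{it:deltaDisj} of Lemma~\ref{lem:geomDoubling} (giving $\lesssim\alpha^{jn}$ balls), and let $j\to\infty$ using $\beta>\alpha^n$. The only differences are bookkeeping: the paper normalises the decay against the fixed ball $3B$ via $B(x,r)\subseteq 3B$ and uses a maximal $\alpha^{-k}r$-separated family, whereas you truncate by the level $M$ and use a maximal disjoint family with the harmless shift $\alpha^k\geq 2$; both are sound.
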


\begin{proof}
Consider a fixed ball $B=B(x_0,r)$. It suffices to prove the claim for $\mu$-a.e. $x\in B$.

For $x\in X$ and $k\in\N$, denote $B_x^k:=B(x,\alpha^{-k}r)$. The point $x$ is called \emph{$k$-bad} if none of the balls $\alpha^j B_x^k$, $j=0,\ldots,k$, is $(\alpha,\beta)$-doubling. Note that $\alpha^k B_x^k=B(x,r)\subseteq 3B$, so for every $k$-bad point $x$ there holds
\begin{equation*}
  \mu(B_x^k)\leq\beta^{-k}\mu(\alpha^k B_x^k)\leq\beta^{-k}\mu(3B).
\end{equation*}

Among the $k$-bad points, choose a maximal $\alpha^{-k}r$-separated family $Y$. Hence the balls $B_y^k$, $y\in Y$, cover all the bad points. On the other hand, the balls $2^{-1}B_y^k=B(y,2^{-1}\alpha^{-k}r)$ are disjoint with their centres contained in $B=B(x,r)$, and hence there are at most $N(2^{-1}\alpha^{-k})^{-n}=N2^n\alpha^{kn}$ of them. Thus
\begin{equation*}
\begin{split}
  \mu(\{x\in B\ k\text{-bad}\})
  &\leq \mu\Big(\bigcup_{y\in Y}B_y^k\Big)
  \leq \sum_{y\in Y}\mu(B_y^k) \\
  &\leq \sum_{y\in Y}\beta^{-k}\mu(3B)
  \leq N2^n\mu(3B)\Big(\frac{\alpha^n}{\beta}\Big)^k
  \underset{k\to\infty}{\longrightarrow} 0.
\end{split}
\end{equation*}
Hence only a zero-set of points can be $k$-bad for all $k\in\N$, and this is precisely what was claimed.
\end{proof}

\begin{proposition}\label{prop:contDense}
Let $(X,d)$ be a geometrically doubling metric space and $\mu$ a Borel measure on $X$ which is finite on bounded sets. Then continuous, boundedly supported functions are dense in $L^p(X,\mu)$ for $p\in[1,\infty)$.
\end{proposition}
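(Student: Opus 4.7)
The plan is the classical three-step density argument, with the Borel regularity of finite measures on a metric space as the only nontrivial ingredient.

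First I would reduce the statement to approximating a single characteristic function $\chi_E$ with $E$ bounded and $\mu(E)<\infty$. Since $X=\bigcup_{k\in\N}B(x_0,k)$ for a fixed base-point $x_0$ and each such ball has finite measure by hypothesis, $\mu$ is $\sigma$-finite; consequently the simple functions $\sum a_i\chi_{E_i}$ with $\mu(E_i)<\infty$ are dense in $L^p(X,\mu)$ by the usual argument, and it suffices to approximate a single $\chi_E$. Truncating $E$ to $E_R:=E\cap B(x_0,R)$ and letting $R\to\infty$, dominated convergence gives $\chi_{E_R}\to\chi_E$ in $L^p$, so I may further assume $E\subseteq B(x_0,R)$.

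Second I would invoke the regularity of finite Borel measures on metric spaces, applied to the finite restriction $\mu|_{B(x_0,2R)}$: for every $\eps>0$ there exist a closed set $F\subseteq E$ and an open set $U$ with $E\subseteq U\subseteq B(x_0,2R)$ and $\mu(U\setminus F)<\eps$. The underlying regularity follows from the standard $\sigma$-algebra argument: the family of Borel sets admitting such inner-closed/outer-open approximants is closed under complements and countable unions, and contains every closed set $F'$ because the open neighbourhoods $\{x:\dist(x,F')<1/n\}$ shrink to $F'$, so continuity from above of the finite measure forces their $\mu$-difference to tend to zero.

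Third, assuming also $X\setminus U\neq\varnothing$ (which can be arranged by intersecting $U$ with a slightly larger ball in the rare case where $X$ itself is bounded and $U=X$), I would define the Urysohn-type function
\begin{equation*}
  \varphi(x):=\frac{\dist(x,X\setminus U)}{\dist(x,X\setminus U)+\dist(x,F)},
\end{equation*}
which is continuous on $X$ (the denominator is strictly positive because $F$ and $X\setminus U$ are disjoint closed sets, so the two distance terms cannot vanish simultaneously), equals $1$ on $F$ and $0$ on $X\setminus U$, lies in $[0,1]$ everywhere, and satisfies $\supp\varphi\subseteq\overline{U}\subseteq\overline{B(x_0,2R)}$, which is bounded. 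The estimate
\begin{equation*}
  \Norm{\chi_E-\varphi}{L^p(X,\mu)}^p=\int_{U\setminus F}\abs{\chi_E-\varphi}^p\ud\mu\leq\mu(U\setminus F)<\eps
\end{equation*}
then closes the argument. The only delicate point is the appeal to Borel regularity, since the hypotheses provide neither local compactness nor inner regularity by compact sets; but the $\sigma$-algebra argument sketched above is valid in any metric space and needs only finiteness of the measure on the region in question together with the fact that closed sets are $G_\delta$ in a metric space. Geometric doubling plays no essential role in the proof itself; it enters only tacitly via the $\sigma$-finiteness that it guarantees jointly with the hypothesis that $\mu$ is finite on bounded sets.
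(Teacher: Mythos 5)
Your proof is correct, and it is in fact leaner than the paper's. Both arguments share the same skeleton: reduce to indicators of bounded Borel sets, invoke inner-closed/outer-open regularity of the Borel measure on a bounded region, and conclude with the Urysohn quotient $\varphi=\dist(\cdot,U^c)/(\dist(\cdot,U^c)+\dist(\cdot,F))$. The difference lies in how continuity of $\varphi$ is justified. The paper first shrinks $F$ to $F\cap\bigcup_{i\leq j}\overline{5B^i}$ using small doubling balls (Lemma~\ref{lem:smallDoubling}), the basic covering theorem and Lemma~\ref{lem:countable}, precisely in order to achieve $d(F,\mathscr{O}^c)>0$, i.e.\ a denominator bounded away from zero (which in particular makes $\varphi$ Lipschitz). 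You observe instead that pointwise positivity of the denominator --- automatic because $F$ and $X\setminus U$ are disjoint closed sets, so the two distance functions cannot vanish at the same point --- already yields continuity, which is all the proposition asserts. This renders the covering step superfluous and shows, as you note, that the statement holds for an arbitrary metric space with a Borel measure finite on bounded sets; geometric doubling enters nowhere essentially. What the paper's longer route buys is the quantitative separation $d(F,\mathscr{O}^c)>0$ and hence a uniformly continuous approximant, which your $\varphi$ need not be. One small point to keep straight: when you apply regularity to the restriction to $B(x_0,2R)$, make sure $F$ comes out closed in $X$ and $U$ open in $X$ (e.g.\ apply the regularity theorem to the finite Borel measure $A\mapsto\mu(A\cap B(x_0,2R))$ on all of $X$ and then replace $U$ by $U\cap B(x_0,2R)$); this is routine but worth a sentence.
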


\begin{proof}
It suffices to approximate the indicator of a Borel set $E$ of finite measure in the $L^p$ norm by a continuous, boundedly supported function. Since $\mu(E)=\lim_{r\to\infty}\mu(E\cap B(x_0,r))$, there is no loss of generality in taking $E$ to be bounded. By a general result concerning Borel measures on metric spaces \cite[Theorem 2.2.2]{Federer}, there is a closed set $F\subseteq E$ and an open set $\mathscr{O}\supseteq E$ so that $\mu(\mathscr{O}\setminus F)<\eps$. Since $\mathscr{O}$ may be replaced by $\mathscr{O}\cap B$, where $B$ is any ball containing $E$, one can take $\mathscr{O}$ to be bounded.

Let $\beta>6^n$, as required in Lemma~\ref{lem:smallDoubling}. For each $x\in F$, choose a $(6,\beta)$-doubling ball $B_x$ of radius $r_x\leq 1$ centred at $x$ with $6B_x\subseteq\mathscr{O}$. By the basic covering theorem \cite[Theorem 1.2]{Heinonen:book}, extract a disjoint (hence countable by Lemma~\ref{lem:countable}) subcollection $B^i=B_{x_i}$ such that $F\subseteq\bigcup_{i=1}^{\infty}5B^i$. Since
\begin{equation*}
  \sum_{i=1}^{\infty}\mu(\overline{5B^i})
  \leq\sum_{i=1}^{\infty}\mu(6B^i)\leq\beta\sum_{i=1}^{\infty}\mu(B^i)
  =\beta\mu\Big(\bigcup_{i=1}^{\infty}B^i\Big)
  \leq\beta\mu(\mathscr{O})<\infty,
\end{equation*}
it follows that $\lim_{j\to\infty}\mu(\bigcup_{i>j}\overline{5B^i})=0$, thus $\mu(F)=\lim_{j\to\infty}\mu(F\cap\bigcup_{i=1}^j\overline{5B^i})$, and hence $F$ can be replaced by the closed set $F\cap\bigcup_{i=1}^j\overline{5B^i}$ for some large $j\in\N$. Since $6B^i\subseteq\mathscr{O}$, it follows that $d(\overline{5B^i},\mathscr{O}^c)\geq r(B^i)$, so the new set $F$ satisfies $d(F,\mathscr{O}^c)>0$. Thus the function
\begin{equation*}
  \varphi(x):=\frac{d(x,\mathscr{O}^c)}{d(x,\mathscr{O}^c)+d(x,F)}
\end{equation*}
is continuous as the quotient of continuous functions, with denominator bounded away from zero, and satisfies $1_F\leq\varphi\leq 1_{\mathscr{O}}$, where $\mathscr{O}$ is a bounded set. Hence $\abs{1_E-\varphi}\leq 1_{\mathscr{O}\setminus F}$, and thus $\Norm{1_E-\varphi}{p}^p\leq\mu(\mathscr{O}\setminus F)<\eps$.
\end{proof}

Let us consider the following variant of the Hardy--Littlewood maximal function:
\begin{equation*}
  \tilde{M}f(x):=\sup_{B\owns x}\frac{1}{\mu(5B)}\int_B\abs{f}\ud\mu,
\end{equation*}
where the supremum is over all balls $B$ containing $x$. For any $\mu$-measurable function $f$, the maximal function $\tilde{M}f$ is lower semi-continuous, hence Borel measurable.

\begin{proposition}\label{prop:maximal}
If $(X,d)$ is geometrically doubling, and $\mu$ is a Borel measure on $X$ which is finite on bounded sets, the maximal operator maps $\tilde{M}:L^1(X,\mu)\to L^{1,\infty}(X,\mu)$ boundedly.
\end{proposition}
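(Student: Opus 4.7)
The plan is to adapt the standard Vitali-type proof of the weak-type $(1,1)$ bound for the maximal operator, replacing the Vitali covering theorem with the basic covering theorem \cite[Theorem 1.2]{Heinonen:book}. The key feature of the definition of $\tilde M$ is that the dilation factor $5$ appearing in the denominator matches exactly the $5B$-expansion in the basic covering theorem, so that no measure doubling is needed; in fact this should yield the weak-type constant~$1$.

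Fix $f\in L^1(X,\mu)$ and $\lambda>0$, and consider $E_\lambda:=\{x\in X:\tilde Mf(x)>\lambda\}$, which is open (hence Borel) by the already noted lower semi-continuity. For every $x\in E_\lambda$ there exists a ball $B_x\ni x$ with $\int_{B_x}\abs{f}\ud\mu>\lambda\mu(5B_x)$. Since the basic covering theorem requires uniformly bounded radii, I would first truncate: let $E_\lambda^R$ denote the (open) subset of $E_\lambda$ consisting of those $x$ for which such a ball $B_x$ can be chosen with $r(B_x)\leq R$. Then $E_\lambda^R\uparrow E_\lambda$ as $R\to\infty$, so it will suffice to bound $\mu(E_\lambda^R)$ uniformly in $R$.

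For fixed $R$, the family $\{B_x\}_{x\in E_\lambda^R}$ has diameters at most $2R$, so the basic covering theorem extracts a pairwise disjoint subfamily $\{B_i\}_{i\in I}$ with $E_\lambda^R\subseteq\bigcup_{i\in I}5B_i$. Geometrical doubling enters the argument only through Lemma~\ref{lem:countable}, which ensures that $I$ is at most countable; then the disjointness of $\{B_i\}$ together with the defining property of each $B_i$ gives
$$
\mu(E_\lambda^R)\leq\sum_{i\in I}\mu(5B_i)\leq\frac{1}{\lambda}\sum_{i\in I}\int_{B_i}\abs{f}\ud\mu=\frac{1}{\lambda}\int_{\bigcup_i B_i}\abs{f}\ud\mu\leq\frac{\Norm{f}{1}}{\lambda}.
$$
Sending $R\to\infty$ should complete the argument.

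There is no serious obstacle here. The one mild technical point is the truncation by $R$, which is needed to apply the basic covering theorem; without the happy coincidence between the dilation factor in the definition of $\tilde M$ and the $5B$-covering property, one would be forced to reintroduce measure doubling (to pass from an average over $5B$ back to one over $B$), which is precisely what the present framework is designed to avoid.
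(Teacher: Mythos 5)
Your proposal is correct and follows essentially the same route as the paper: truncate to balls of radius at most $R$ (your set $E_\lambda^R$ is exactly $\{\tilde M_Rf>\lambda\}$ for the paper's truncated operator $\tilde M_R$), apply the basic covering theorem to a disjoint countable subfamily, use the matching dilation factor $5$ to get the weak $(1,1)$ bound with constant $1$, and let $R\to\infty$.
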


\begin{proof}
Consider first a modified maximal operator $\tilde{M}_R$, where the supremum is restricted to balls of radius at most $R$. Then $\tilde{M}_R f$, too, is lower semi-continuous. For every $x\in\{\tilde{M}_Rf>t\}$, there exists a ball $B_x$ of radius at most $R$ such that $\mu(5B_x)^{-1}\int_{B_x}\abs{f}\ud\mu>t$. In particular, the balls $B_x$ of uniformly bounded radius cover the set $\{\tilde{M}_Rf>t\}$. By the basic covering theorem \cite[Theorem 1.2]{Heinonen:book}, among these balls one can pick a disjoint (hence countable by Lemma~\ref{lem:countable}) subcollection $B^i$, $i\in I$, so that the balls $5B^i$ still cover $\{\tilde{M}_R f>t\}$. Thus
\begin{equation*}
  \mu(\tilde{M}_R f>t)\leq\mu\Big(\bigcup_{i\in I}5B^i\Big)
  \leq\sum_{i\in I}\mu(5 B^i)
  \leq\frac{1}{t}\sum_{i\in I}\int_{B^i}\abs{f}\ud\mu
  \leq\frac{1}{t}\Norm{f}{1}.
\end{equation*}
Since $\tilde{M}_R f\uparrow\tilde{M}f$, the result follows from dominated convergence.
\end{proof}

\begin{corollary}\label{cor:differentiation}
Let $(X,d)$ be a geometrically doubling metric space and $\mu$ be a Borel measure on $X$ which is finite on bounded sets. Let $\beta>5^n$, where $n$ is as in condition \eqref{it:deltaDisj} of Lemma~\ref{lem:geomDoubling}. Then for all $f\in L^1_{\loc}(X,\mu)$ and $\mu$-a.e. $x\in X$,
\begin{equation*}
  f(x)=\lim_{\ontop{B\downarrow x}{(5,\beta)\textup{-doubling}}}\fint_B f\ud\mu,
\end{equation*}
where the limit is along the decreasing family of all $(5,\beta)$-doubling balls containing $x$, ordered by set inclusion.
\end{corollary}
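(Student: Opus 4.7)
The plan is to follow the standard template for Lebesgue differentiation: density of continuous functions plus a weak-type maximal inequality, with the only subtle point being the passage between $\mu(B)$ and $\mu(5B)$ via the $(5,\beta)$-doubling hypothesis.

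First I would observe that Lemma~\ref{lem:smallDoubling}, applied with $\alpha=5$ and the given $\beta>5^n$, shows that for $\mu$-a.e.\ $x\in X$ the family $\mathcal{F}_x$ of all $(5,\beta)$-doubling balls containing $x$ contains balls centred at $x$ of arbitrarily small radius (namely $5^{-j}r$ for $j\in\N$). In particular, the limit in the statement is well defined on a set of full $\mu$-measure, and $\mathcal{F}_x$ is a directed set under reverse inclusion, since for any $B_1,B_2\in\mathcal{F}_x$ a sufficiently small $(5,\beta)$-doubling ball centred at $x$ lies in both.

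Since the statement is local, fix a ball $B_0\subseteq X$ and restrict to $x\in B_0$; by the previous step it suffices to show that the oscillation
\begin{equation*}
  \Omega f(x):=\limsup_{\substack{B\downarrow x\\(5,\beta)\text{-doub.}}}\Babs{\fint_B f\ud\mu - f(x)}
\end{equation*}
vanishes $\mu$-a.e.\ in $B_0$. Given $\eps>0$, use Proposition~\ref{prop:contDense} to split $f\mathbf{1}_{2B_0}=g+h$ with $g$ continuous and boundedly supported and $\Norm{h}{1}<\eps$. Uniform continuity of $g$ forces $\Omega g\equiv 0$: for any $\eta>0$, picking a ball $B_\eta\in\mathcal{F}_x$ of radius $<\delta/2$ where $\delta$ is a modulus-of-continuity radius for $g$ at $x$ ensures every $B\subseteq B_\eta$ in $\mathcal{F}_x$ lies in $B(x,\delta)$, so $\abs{\fint_B g-g(x)}<\eta$. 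Since $\Omega f\leq\Omega g+\Omega h$, the problem reduces to controlling $\Omega h$.

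The crucial step is the \emph{doubling trick}: if $B\in\mathcal{F}_x$, then $\mu(5B)\leq\beta\mu(B)$, so
\begin{equation*}
  \fint_B\abs{h}\ud\mu=\frac{\mu(5B)}{\mu(B)}\cdot\frac{1}{\mu(5B)}\int_B\abs{h}\ud\mu\leq\beta\tilde{M}h(x).
\end{equation*}
Hence $\Omega h(x)\leq\abs{h(x)}+\beta\tilde{M}h(x)$. Combining Chebyshev's inequality for $h$ with the weak-$(1,1)$ bound of Proposition~\ref{prop:maximal} for $\tilde{M}h$ yields, for every $\eta>0$,
\begin{equation*}
  \mu\bigl(\{x\in B_0:\Omega f(x)>\eta\}\bigr)\leq\mu\bigl(\{\abs{h}>\eta/2\}\bigr)+\mu\bigl(\{\tilde{M}h>\eta/(2\beta)\}\bigr)\leq\frac{C\beta}{\eta}\Norm{h}{1}<\frac{C\beta\eps}{\eta}.
\end{equation*}
Letting $\eps\to 0$ gives $\mu(\{\Omega f>\eta\}\cap B_0)=0$ for each $\eta>0$; then letting $\eta\to 0$ along a countable sequence and exhausting $X$ by countably many bounded balls yields the conclusion. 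The only non-routine ingredient is the replacement of $\mu(B)$ by $\mu(5B)$ in the denominator, which is precisely what the $(5,\beta)$-doubling assumption buys and which explains why the variant $\tilde{M}$ (rather than the usual centred maximal operator) is the correct tool in this setting.
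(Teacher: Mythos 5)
Your proposal is correct and follows essentially the same route as the paper: Lemma~\ref{lem:smallDoubling} to make the limit meaningful, density of continuous boundedly supported functions (Proposition~\ref{prop:contDense}), and the observation that $(5,\beta)$-doubling lets you bound $\fint_B\abs{h}\ud\mu$ by $\beta\tilde{M}h(x)$ so that the weak $(1,1)$ bound of Proposition~\ref{prop:maximal} applies. The paper phrases the comparison directly in terms of $f-g$ rather than an explicit splitting $f\mathbf{1}_{2B_0}=g+h$, but this is only a cosmetic difference.
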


\begin{proof}
By Lemma~\ref{lem:smallDoubling}, there exist arbitrarily small $(5,\beta)$-doubling balls containing $x$, so that the limit makes sense for $\mu$-a.e. $x\in X$. By a standard localisation, it suffices to consider $f\in L^1(X,\mu)$. The assertion is furthermore clear for continuous boundedly supported functions, which are dense in $L^1(X,\mu)$ by Proposition~\ref{prop:contDense}. For $f\in L^1(X,\mu)$ and a continous boundedly supported $g$,
\begin{equation*}
\begin{split}
  &\limsup_{\ontop{B\downarrow x}{(5,\beta)\textup{-doubling}}}\fint_B\abs{f(y)-f(x)}\ud\mu(y) \\
  &\leq\sup_{B\owns x}\frac{\beta}{\mu(5B)}\int_B\abs{f(y)-g(y)}\ud\mu(y)
    +\abs{g(x)-f(x)} = \beta\tilde{M}(f-g)(x)+\abs{g(x)-f(x)}.
\end{split}
\end{equation*}
By Proposition~\ref{prop:maximal}, the function on the right exceeds a given $\eps>0$ in a set of $\mu$-measure at most $C\eps^{-1}\Norm{g-f}{1}$. Since this can be made arbitrarily small by the choice of $g$, the left side must vanish $\mu$-a.e.
\end{proof}

\section{The RBMO space of Tolsa}\label{sec:RBMO}

Tolsa's \cite{Tolsa:RBMO} definition of $\RBMO(\mu)$, where $\mu$ is a Borel measure on $\R^n$ with $\mu(B)\leq Cr_B^d$, is generalised to the present setting in a straightforward way. As mentioned before, the present discussion is more closely modelled after that of Nazarov, Treil and Volberg \cite{NTV:Tb}. It is assumed throughout the section that $(X,d,\mu)$ is a geometrically doubling and upper doubling metric measure space.

\begin{definition}
Fix a parameter $\varrho>1$. A function $f\in L^1_{\loc}(\mu)$ is said to be in the space $\RBMO(\mu)$ (regularised bounded mean oscillation) if there exists a number $A$, and for every ball $B$, a number $f_B$ (which is \emph{not} required to be the average value $\ave{f}_B:=\fint_B f\ud\mu$), such that
\begin{equation}\label{eq:BMOrho}
  \frac{1}{\mu(\varrho B)}\int_B\abs{f-f_B}\ud\mu\leq A,
\end{equation}
and, whenever $B\subset B_1$ are two balls,
\begin{equation}\label{eq:regularity}
  \abs{f_B-f_{B_1}}\leq A\Big\{1+\int_{2B_1\setminus B}\frac{\ud\mu(x)}{\lambda(c_B,d(x,c_B))}\Big\}.
\end{equation}
The infimum of the admissible constants $A$ is denoted by $\Norm{f}{\RBMO}$.
\end{definition}

The condition \eqref{eq:regularity} depends on the choice of the function $\lambda$. However, it is understood that the considered metric measure space $(X,d,\mu)$ with the upper doubling property is equipped with a fixed dominating function $\lambda$, also used above, so that there is no need to indicate such dependence explicitly in the notation. The dependence on the parameter $\varrho>1$ is only implicit, as will be shown in Lemma~\ref{lem:varrho}

\begin{remark}\label{rem:regUpperBound}
The upper bound in \eqref{eq:regularity} can be further dominated as follows:
\begin{equation*}
\begin{split}
  &\int_{2B_1\setminus B}\frac{\ud\mu(x)}{\lambda(c_B,d(x,c_B))}
  \leq\sum_{1\leq k<\log_2(4r_{B_1}/r_B)}\int_{2^k B\setminus 2^{k-1}B}
    \frac{\ud\mu(x)}{\lambda(c_B,d(x,c_B))} \\
  &\leq\sum_{1\leq k<\log_2(4r_{B_1}/r_B)}\frac{\mu(c_B,2^k r_B)}{\lambda(c_B,2^{k-1}r_B)}
  \leq\sum_{1\leq k<\log_2(4r_{B_1}/r_B)}C_{\lambda}\leq C_{\lambda}\log_2\Big(\frac{4r_{B_1}}{r_B}\Big).
\end{split}
\end{equation*}
\end{remark}

\begin{lemma}
$\RBMO(\mu)$ is a Banach space.
\end{lemma}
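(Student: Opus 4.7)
The statement asserts that $\RBMO(\mu)$, understood modulo constants, is a complete normed space. That $\Norm{\cdot}{\RBMO}$ is a seminorm is clear from adding near-optimal admissible numbers $f_B^\sharp$ and $g_B^\sharp$, while $\Norm{f}{\RBMO}=0$ forces $\fint_B|f-\ave{f}_B|\ud\mu=0$ for every $B$ (approximate $\ave{f}_B$ by $f_B^{\varepsilon}$ and let $\varepsilon\to 0$), and Lebesgue differentiation on $(5,\beta)$-doubling balls (Corollary~\ref{cor:differentiation}) identifies $f$ with a constant $\mu$-a.e. The real content is completeness, which I would handle by a telescoping construction.

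Given a Cauchy sequence $(f_n)$, first pass to a subsequence (relabelled) with $\Norm{f_{n+1}-f_n}{\RBMO}\leq 2^{-n}$, fix a reference ball $B_0$, and subtract constants to arrange $\ave{f_n}_{B_0}=0$. For each $n\geq 2$, choose numbers $(f_n-f_{n-1})_B^\sharp$ admissible for $f_n-f_{n-1}$ with constant $A_n\leq 2\cdot 2^{-n}$, and set
\begin{equation*}
  (f_n)_B^\sharp:=(f_1)_B^\sharp+\sum_{k=2}^n (f_k-f_{k-1})_B^\sharp,
\end{equation*}
which is admissible for $f_n$ with a constant bounded uniformly in $n$. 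For any \emph{fixed} ball $B$, combining \eqref{eq:BMOrho} at $B_0$ with the normalisation $\ave{f_n-f_{n-1}}_{B_0}=0$ controls $|(f_n-f_{n-1})_{B_0}^\sharp|$ by $A_n\mu(\varrho B_0)/\mu(B_0)$; the regularity condition \eqref{eq:regularity}, applied via a common superball of $B$ and $B_0$ (using Remark~\ref{rem:regUpperBound}), then transfers this to $|(f_n-f_{n-1})_B^\sharp|\leq C(B,B_0)2^{-n}$. Summing in $n$ gives a limit $f_B^\sharp$, and a parallel estimate for $\int_B|f_n-f_{n-1}|\ud\mu$ shows that $(f_n)$ is Cauchy in $L^1(B)$, hence converges to some $f\in L^1_{\loc}(\mu)$.

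It remains to identify the limit as an element of $\RBMO(\mu)$. Since $f_m\to f$ in $L^1(B)$ and $(f_m)_B^\sharp\to f_B^\sharp$, I would pass to the limit $m\to\infty$ in the $\RBMO$ inequalities for $f_m-f_n$ with numbers $(f_m)_B^\sharp-(f_n)_B^\sharp$ to conclude that $f_B^\sharp-(f_n)_B^\sharp$ is admissible for $f-f_n$ with constant $\sum_{k>n}A_k\to 0$. Hence $\Norm{f-f_n}{\RBMO}\to 0$, and the case $n=1$ yields $f\in\RBMO(\mu)$; the original Cauchy sequence then converges to $f$ as well. The main subtlety is that the numbers $f_B^\sharp$ are free parameters rather than canonical averages, so they must be chosen consistently across $n$ before any limit can be extracted --- this is precisely what the telescoping along a $2^{-n}$-subsequence accomplishes, and it is the only step that requires genuine thought.
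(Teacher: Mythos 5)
Your proposal is correct and follows essentially the same route as the paper: reduce completeness to summing a telescoping (absolutely convergent) series, anchor the free constants at a reference ball $B_0$, and use the regularity condition \eqref{eq:regularity} (via Remark~\ref{rem:regUpperBound}) to bound $|f_B^\sharp|$ by $C(B,B_0)$ times the $\RBMO$ norm on each fixed ball, so that both the constants and the functions converge and the limit inherits admissible numbers. The only cosmetic difference is that you normalise by the average $\ave{f_n}_{B_0}=0$ while the paper normalises by the chosen constant $f^k_{B_0}=0$; both work for the same reason.
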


\begin{proof}
One routinely checks that $\RBMO(\mu)$ is a linear space, and $\Norm{\cdot}{\RBMO}$ is a norm when any two functions, whose difference is $\mu$-a.e. equal to a constant, are identified. To prove completeness, first fix a reference ball $B_0$ and replace each function $f^k\in\RBMO(\mu)$, where $\sum_{k=1}^{\infty}\Norm{f^k}{\RBMO}<\infty$, by the function $f^k-f^k_{B_0}$ from the same equivalence class. Also replace the constants $f^k_B$ by $f^k_B-f^k_{B_0}$. Keep denoting these new functions by $f^k$, so that now $f^k_{B_0}=0$. From \eqref{eq:regularity} it follows that $\abs{f^k_B}\leq c(B)\Norm{f^k}{\RBMO}$ for every ball $B$, so in particular the series $\sum_{k=1}^{\infty}f^k_B$ converges to a number $f_B$ for each ball $B$. Using these numbers in the definition of the $\RBMO(\mu)$ space, it is easy to check that $\sum_{k=1}^{\infty}f_k$ converges $\mu$-a.e. and in the norm of $\RBMO(\mu)$ to a function $f$ with $\Norm{f}{\RBMO}\leq\sum_{k=1}^{\infty}\Norm{f^k}{\RBMO}$.
\end{proof}

\begin{lemma}\label{lem:varrho}
The $\RBMO(\mu)$ space is independent of the choice of the parameter $\varrho>1$.
\end{lemma}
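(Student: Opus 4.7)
The monotonicity in one direction is immediate. If $\varrho_1<\varrho_2$ and $f\in\RBMO_{\varrho_1}(\mu)$ with constants $\{f_B\}$, then the very same constants witness $f\in\RBMO_{\varrho_2}(\mu)$: the regularity condition \eqref{eq:regularity} does not involve $\varrho$ at all, and $\mu(\varrho_1 B)\leq\mu(\varrho_2 B)$ forces \eqref{eq:BMOrho} with $\varrho_2$ whenever it holds with $\varrho_1$. Thus $\Norm{f}{\RBMO_{\varrho_2}}\leq\Norm{f}{\RBMO_{\varrho_1}}$.

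The substance lies in the reverse inclusion $\RBMO_{\varrho_2}(\mu)\subseteq\RBMO_{\varrho_1}(\mu)$ for $1<\varrho_1<\varrho_2$. Given $f\in\RBMO_{\varrho_2}(\mu)$ with constants $\{f_B\}$ and $A:=\Norm{f}{\RBMO_{\varrho_2}}$, the plan is to exhibit replacement constants $\{\tilde f_B\}$ for which the $\varrho_1$-versions of both conditions hold with norm comparable to $A$. The key construction uses Lemma~\ref{lem:largeDoubling}: fix $\alpha\geq\varrho_2$ and $\beta>C_\lambda^{\log_2\alpha}$, and for each ball $B$ let $N(B)\in\N$ be minimal such that $B^\sharp:=\alpha^{N(B)}B$ is $(\alpha,\beta)$-doubling. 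Set $\tilde f_B:=f_{B^\sharp}$; in particular $B^\sharp=B$ precisely when $B$ is already $(\alpha,\beta)$-doubling.

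For the regularity of $\{\tilde f_B\}$, given $B\subset B_1$ I would write $\tilde f_B-\tilde f_{B_1}=f_{B^\sharp}-f_{B_1^\sharp}$ and chain through a concentric super-ball containing both $B^\sharp$ and $B_1^\sharp$, applying \eqref{eq:regularity} for the original constants on each link and controlling the resulting $K$-integrals by $1+K(B,B_1)$ via Remark~\ref{rem:regUpperBound}. For the $\varrho_1$-version of \eqref{eq:BMOrho} one has
\begin{equation*}
 \int_B|f-\tilde f_B|\ud\mu\leq\int_{B^\sharp}|f-f_{B^\sharp}|\ud\mu\leq A\mu(\varrho_2 B^\sharp)\leq A\mu(\alpha B^\sharp)\leq A\beta\mu(B^\sharp),
\end{equation*}
so the matter reduces to bounding $\mu(B^\sharp)$ by a constant multiple of $\mu(\varrho_1 B)$. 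When $N(B)=0$ this is trivial since $B^\sharp=B\subseteq\varrho_1 B$. The other case needs an alternative splitting of $\int_B|f-\tilde f_B|\ud\mu$ in which the regularity cost $|f_B-f_{B^\sharp}|$, controlled by $A\{1+K(B,B^\sharp)\}$ and hence by $ACN(B)$ via Remark~\ref{rem:regUpperBound}, is combined with the $\varrho_2$-oscillation of $f$ on $B$ itself, and the non-doubling chain $\mu(\alpha^k B)<\beta^{-(N(B)-k)}\mu(B^\sharp)$ enforced by minimality of $N(B)$ is matched against the upper doubling bound $\mu(B^\sharp)\leq\lambda(c_B,\alpha^{N(B)}r_B)\leq C_\lambda^{N(B)\log_2\alpha+1}\lambda(c_B,r_B)$ furnished by the proof of Lemma~\ref{lem:largeDoubling}.

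The main obstacle, as I see it, is precisely this quantitative comparison between $\mu(B^\sharp)$ and $\mu(\varrho_1 B)$ in the non-doubling case: Lemma~\ref{lem:largeDoubling} supplies the existence of $B^\sharp$ but gives no uniform bound on $N(B)$, so the estimate has to be absorbed via the regularity \eqref{eq:regularity}, which is the only tool connecting the constants $f_B$ at different scales. Once this comparison is in place, the $\varrho$-independence of the space (and the equivalence of norms, with a constant depending on $\varrho_1,\varrho_2,C_\lambda$ and the geometric doubling parameter) follows by applying the construction symmetrically.
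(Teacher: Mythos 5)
Your easy direction is the same as the paper's, but your plan for the substantive inclusion $\RBMO_{\varrho_2}(\mu)\subseteq\RBMO_{\varrho_1}(\mu)$ has a gap that I do not think can be closed along the route you describe. Both of your proposed splittings of $\int_B|f-\tilde f_B|\ud\mu$ terminate in a quantity of the form $A\mu(\varrho_2 B)$ or $A\mu(B^\sharp)$, and you then need to dominate this by $CA\mu(\varrho_1 B)$. For a general upper doubling measure this comparison is simply false: the dominating function $\lambda$ only provides \emph{upper} bounds on measures of balls, so nothing prevents $\mu(\varrho_1 B)$ from being essentially $\mu(B)$ while $\mu(\varrho_2 B)$ (let alone $\mu(B^\sharp)\geq\beta^{N(B)}\mu(B)$) is enormously larger. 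The chain $\mu(\alpha^kB)<\beta^{-(N(B)-k)}\mu(B^\sharp)$ combined with $\mu(B^\sharp)\leq C_\lambda^{N(B)\log_2\alpha+1}\lambda(c_B,r_B)$ bounds things by $\lambda(c_B,r_B)$, which again has no lower-bound relation to $\mu(\varrho_1 B)$. Passing from $f_B$ to $f_{B^\sharp}$ relocates the constant but does not touch the measure comparison, which is the actual difficulty.

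The idea you are missing is a covering at the level of the \emph{domain of integration}, not of the constants. The paper keeps the original numbers $f_B$ and, for a fixed ball $B_0$ of radius $r$, uses geometric doubling to cover $B_0$ by at most $N\delta^{-n}$ balls $B_i=B(x_i,\delta r)$ with $x_i\in B_0$ and $\delta:=(\varrho_1-1)/\varrho_2$, chosen precisely so that $\varrho_2 B_i\subseteq\varrho_1 B_0$. Then
\begin{equation*}
  \int_{B_0}\abs{f-f_{B_0}}\ud\mu
  \leq\sum_i\Big\{\int_{B_i}\abs{f-f_{B_i}}\ud\mu+\mu(B_i)\abs{f_{B_i}-f_{B_0}}\Big\}
  \leq C\Norm{f}{\RBMO_{\varrho_2}}\sum_i\mu(\varrho_2 B_i)
  \leq C\Norm{f}{\RBMO_{\varrho_2}}\mu(\varrho_1 B_0),
\end{equation*}
where $\abs{f_{B_i}-f_{B_0}}\leq C\Norm{f}{\RBMO_{\varrho_2}}$ follows by chaining both $B_i$ and $B_0$ to $\varrho_1 B_0$ through \eqref{eq:regularity} and Remark~\ref{rem:regUpperBound} (the radius ratios are bounded in terms of $\varrho_1,\varrho_2$). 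The point is that each small ball pays its oscillation against $\mu(\varrho_2 B_i)$, which sits inside $\varrho_1 B_0$, and the bounded cardinality of the cover replaces any measure-doubling assumption. I would recommend abandoning the $B^\sharp$ construction here entirely; doubling expansions are the right tool for Lemma~\ref{lem:doublingAncestor}, but not for this lemma.
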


\begin{proof}
Denote the $\RBMO(\mu)$ space with parameter $\varrho$ temporarily by $\RBMO_{\varrho}(\mu)$, and let $\varrho>\sigma>1$. It is obvious that $\RBMO_{\sigma}(\mu)\subseteq\RBMO_{\varrho}(\mu)$, where the inclusion map has norm at most $1$, so only the converse direction requires proof.

Let $\delta:=(\sigma-1)/\varrho$ and consider a fixed ball $B_0$. Then there exist balls $B_i=B(x_i,\delta r)$, $x_i\in B_0$ and $i\in I$, which cover $B_0$, where $\abs{I}\leq N\delta^{-n}$. Moreover, $\varrho B_i=B(x_i,\delta\varrho r)\subseteq B(x_0,\sigma r)=\sigma B_0$, since $r+\delta\varrho r=\sigma r$. By Remark~\ref{rem:regUpperBound}, it follows that
\begin{equation*}
\begin{split}
  \abs{f_{B_i}-f_{B_0}}
  \leq\abs{f_{B_i}-f_{\sigma B_0}}+\abs{f_{\sigma B_0}-f_{B_0}}
  &\leq \Norm{f}{\RBMO_{\varrho}}\Big(2+C_{\lambda}\log_2\frac{4r(\sigma B_0)}{r(B_i)}
    +C_{\lambda}\log_2\frac{4r(\sigma B_0)}{r(B_0)}\Big) \\
  &\leq c(\sigma,\varrho)C_{\lambda}\Norm{f}{\RBMO_{\varrho}}.
\end{split}
\end{equation*}
Thus
\begin{equation*}
\begin{split}
  \int_{B_0}\abs{f-f_{B_0}}\ud\mu
  &\leq\sum_{i\in I}\int_{B_i}\abs{f-f_{B_0}}\ud\mu
  \leq\sum_{i\in I}\Big\{\int_{B_i}\abs{f-f_{B_i}}\ud\mu+\mu(B_i)\abs{f_{B_i}-f_{B_0}}\Big\} \\
  &\leq\sum_{i\in I}C\Norm{f}{\RBMO_{\varrho}}\mu(\varrho B_i)
  \leq C\Norm{f}{\RBMO_{\varrho}}\mu(\sigma B_0)\sum_{i\in I}1
  \leq C\Norm{f}{\RBMO_{\varrho}}\mu(\sigma B_0).
\end{split}
\end{equation*}
Hence $\Norm{f}{\RBMO_{\sigma}}\leq C\Norm{f}{\RBMO_{\varrho}}$, and the same numbers $f_B$ work in the definition of both spaces.
\end{proof}

In spaces of homogeneous type, the new BMO space reduces to the classical one:

\begin{proposition}\label{prop:doublingBMO}
If $\mu$ is a doubling measure and $\lambda(x,r)=\mu(B(x,r))$, then $\RBMO(\mu)=\BMO(\mu)$ with equivalent norms.
\end{proposition}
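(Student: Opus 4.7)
The plan is to establish both inclusions $\RBMO(\mu) \subseteq \BMO(\mu)$ and $\BMO(\mu) \subseteq \RBMO(\mu)$ with comparable norms, using the freedom in the choice of $\varrho$ granted by Lemma~\ref{lem:varrho} to take $\varrho = 2$ throughout.

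The direction $\RBMO(\mu) \subseteq \BMO(\mu)$ is immediate from measure doubling. Choosing numbers $f_B$ that witness $\Norm{f}{\RBMO_2}$, the triangle inequality together with $\abs{\ave{f}_B - f_B} \leq \fint_B \abs{f-f_B} \ud\mu$ gives
\begin{equation*}
  \fint_B \abs{f - \ave{f}_B} \ud\mu
  \leq 2\fint_B \abs{f-f_B}\ud\mu
  = \frac{2\mu(2B)}{\mu(B)}\cdot\frac{1}{\mu(2B)}\int_B\abs{f-f_B}\ud\mu
  \leq 2C_\mu \Norm{f}{\RBMO_2}.
\end{equation*}

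For the converse, given $f\in\BMO(\mu)$ I set $f_B:=\ave{f}_B$; condition \eqref{eq:BMOrho} is trivial from $\mu(\varrho B)\geq\mu(B)$. The substance lies in verifying \eqref{eq:regularity} for $B\subseteq B_1$. Since $c_B\in B\subseteq B_1$ forces $d(c_B,c_{B_1})<r_{B_1}$, the concentric dilations $B_k:=B(c_B,2^k r_B)$ satisfy $B_1\subseteq B_N\subseteq 5B_1$ with $N:=\lceil 1+\log_2(r_{B_1}/r_B)\rceil$, so $\mu(B_N)\leq C_\mu^3\mu(B_1)$ by doubling. The standard bound $\abs{\ave{f}_{B_{k-1}}-\ave{f}_{B_k}}\leq(\mu(B_k)/\mu(B_{k-1}))\Norm{f}{\BMO}$ summed naively over all $N$ steps would produce a factor $N\sim\log(r_{B_1}/r_B)$, which can far exceed $\log(\mu(B_1)/\mu(B))$. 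To be sharp, I extract a subchain $B_{k_0},B_{k_1},\ldots,B_{k_J}$ where $k_{j+1}$ is the smallest index with $\mu(B_k)\geq 2\mu(B_{k_j})$; doubling forces the consecutive ratios into $[2,2C_\mu]$, so only $J\lesssim\log(\mu(B_1)/\mu(B))$ jumps occur, and within each block the intermediate averages stay within $2\Norm{f}{\BMO}$ of $\ave{f}_{B_{k_j}}$. Together with $\abs{\ave{f}_{B_N}-\ave{f}_{B_1}}\leq(\mu(B_N)/\mu(B_1))\Norm{f}{\BMO}\leq C_\mu^3\Norm{f}{\BMO}$ this yields
\begin{equation*}
  \abs{\ave{f}_B - \ave{f}_{B_1}} \leq C\Norm{f}{\BMO}\Big(1+\log\frac{\mu(B_1)}{\mu(B)}\Big).
\end{equation*}

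The main obstacle is to match this upper bound with the right-hand side of \eqref{eq:regularity} from below, so that the logarithm can be absorbed into the integral. Decomposing $\bigsqcup_{k=1}^K (B_k\setminus B_{k-1})\subseteq 2B_1\setminus B$ with $K:=\lfloor\log_2(r_{B_1}/r_B)\rfloor$, and noting $\mu(B(c_B,d(x,c_B)))\leq\mu(B_k)$ on $B_k\setminus B_{k-1}$, I estimate
\begin{equation*}
  \int_{2B_1\setminus B}\frac{\ud\mu(x)}{\mu(B(c_B,d(x,c_B)))}
  \geq \sum_{k=1}^K\Big(1-\frac{1}{\theta_k}\Big)
  \geq \frac{1}{C_\mu}\sum_{k=1}^K\log\theta_k
  = \frac{1}{C_\mu}\log\frac{\mu(B_K)}{\mu(B)},
\end{equation*}
where $\theta_k:=\mu(B_k)/\mu(B_{k-1})\in[1,C_\mu]$ and the elementary inequality $1-1/\theta\geq(\log\theta)/C_\mu$ on $[1,C_\mu]$ is used (both sides vanish at $\theta=1$, and comparing derivatives shows that the LHS grows faster on this interval). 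Since $B_K\subseteq 2B_1$ and $B_1\subseteq 4B_K$, the quantity $\mu(B_K)$ is comparable to $\mu(B_1)$, so $\log(\mu(B_K)/\mu(B))\geq\log(\mu(B_1)/\mu(B))-2\log C_\mu$. Inserting this into the previous display gives \eqref{eq:regularity} with $A=C\Norm{f}{\BMO}$, completing the argument.
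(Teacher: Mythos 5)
Your proof is correct and follows essentially the same route as the paper: the easy inclusion via measure doubling, and for the converse a chain of concentric dyadic dilations of $B$ from which one extracts a subchain along which the measure at least doubles, so that both $\abs{\ave{f}_B-\ave{f}_{B_1}}$ and the integral in \eqref{eq:regularity} are comparable to $\log(\mu(B_1)/\mu(B))$. The only (harmless) variation is in the lower bound for the integral, where you telescope $\log\theta_k$ over all dyadic annuli via the inequality $1-1/\theta\geq(\log\theta)/C_\mu$, whereas the paper sums a fixed constant over just the annuli between consecutive measure-doubling balls of its subchain.
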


\begin{proof}
If $\mu$ is doubling, then \eqref{eq:BMOrho} is equivalent to the usual BMO condition, and if this condition holds for some $f_B$, it also holds with $f_B=\ave{f}_B$. Hence it remains to investigate the other condition \eqref{eq:regularity} in this case. It will be shown that, in fact,
\begin{equation}\label{eq:regDoubling}
  \abs{\ave{f}_B-\ave{f}_{B_1}}\leq
  C\Norm{f}{\BMO}\Big(1+\log_2\frac{\mu(B_1)}{\mu(B)}\Big)
  \leq C\Norm{f}{\BMO}\Big\{1+\int_{2B_1\setminus B}\frac{\ud\mu(x)}{\mu(B(c_B,d(x,c_B)))}\Big\},
\end{equation}
which proves the assertion.

For $B\subset B_1$, define inductively $B^0:=B$ and $B^i$ to be the smallest $2^k B^{i-1}$, $k\in\N$, with $\mu(2^k B^{i-1})>2\mu(B^{i-1})$; hence $\mu(2^k B^{i-1})\leq C_{\mu}\mu(2^{k-1} B^{i-1})\leq 2C_{\mu}\mu(B^{i-1})$. Let $i_0$ be the first index so that $B^{i_0}\not\subseteq 2B_1$. Then $r(B^{i_0})>r(B_1)$, hence $B_1\subseteq 2B^{i_0}\subseteq B^{i_0+1}$, and therefore
\begin{equation*}
  \mu(B_1)\leq \mu(B^{i_0+1})\leq 2C_{\mu}\mu(B^{i_0})\leq 4C_{\mu}^2\mu(B^{i_0-1})
  \leq 4C_{\mu}^2\mu(2B_1)\leq 4C_{\mu}^3\mu(B_1).
\end{equation*}
Moreover,
$
  2^{i_0}\mu(B)\leq\mu(B^{i_0})\leq (2C_{\mu})^{i_0}\mu(B),
$
and combining these two chains of inequalities,
\begin{equation*}
  2^{i_0-1}C_{\mu}^{-2}\leq\frac{\mu(B_1)}{\mu(B)}\leq (2C_{\mu})^{i_0+1}.
\end{equation*}

Then
\begin{equation*}
\begin{split}
  \abs{\ave{f}_B-\ave{f}_{B_1}}
  &\leq\sum_{i=1}^{i_0+1}\abs{\ave{f}_{B^i}-\ave{f}_{B^{i-1}}}+\abs{\ave{f}_{B^{i_0+1}}-\ave{f}_{B_1}} \\
  &\leq\sum_{i=1}^{i_0+1}\fint_{B^{i-1}}\abs{f-\ave{f}_{B^i}}\ud\mu
    +\fint_{B_1}\abs{f-\ave{f}_{B^{i_0+1}}}\ud\mu \\
  &\leq\sum_{i=1}^{i_0+1}\frac{\mu(B^i)}{\mu(B^{i-1})}\fint_{B^i}\abs{f-\ave{f}_{B^i}}\ud\mu
    +\frac{\mu(B^{i_0+1})}{\mu(B_1)}\fint_{B^{i_0+1}}\abs{f-\ave{f}_{B^{i_0+1}}}\ud\mu \\
  &\leq\sum_{i=1}^{i_0+1}2C_{\mu}\Norm{f}{\BMO}+4C_{\mu}^3\Norm{f}{\BMO}
   \leq C(1+i_0)\Norm{f}{\BMO} \\
  &\leq C\Big(1+\log_2\frac{\mu(B_1)}{\mu(B)}\Big)\Norm{f}{\BMO}.
\end{split}
\end{equation*}
On the other hand, the quantity on the right of \eqref{eq:regularity} can be minorized by
\begin{equation*}
\begin{split}
  &\int_{2B_1\setminus B}\frac{\ud\mu(x)}{\mu(c_B,d(x,c_B))}
  \geq\sum_{i=1}^{i_0-1}\int_{B^i\setminus B^{i-1}}\frac{\ud\mu(x)}{\mu(c_B,d(x,c_B))} \\
  &\geq\sum_{i=1}^{i_0-1}\frac{\mu(B^i\setminus B^{i-1})}{\mu(B^{i-1})}
  \geq\sum_{i=1}^{i_0-1}=i_0-1\geq c\log_2\frac{\mu(B_1)}{\mu(B)}.
\end{split}
\end{equation*}
This completes the proof of \eqref{eq:regDoubling}.
\end{proof}

\section{RBMO and doubling balls}\label{sec:RBMOaux}

Let $(X,d,\mu)$ be geometrically doubling and upper doubling, and let some $\alpha,\beta\geq 2$ be fixed so that the conclusions of Lemmas~\ref{lem:largeDoubling} and \ref{lem:smallDoubling} are valid. Then for every ball $B$, denote by $B'$ the smallest $\alpha^j B$ ($j\in\N$) which is $(\alpha,\beta)$-doubling.

\begin{lemma}\label{lem:doublingAncestor}
For $f\in\RBMO(\mu)$, there holds $\abs{f_B-f_{B'}}\leq C\Norm{f}{\RBMO}$.
\end{lemma}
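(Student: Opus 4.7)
The plan is to apply the regularity condition~\eqref{eq:regularity} directly with the pair $B\subset B'$, and then show that the resulting integral $I:=\int_{2B'\setminus B}\ud\mu(x)/\lambda(c_B,d(x,c_B))$ is bounded by a constant depending only on the parameters $\alpha,\beta,C_\lambda$, not on how many dilations it took to reach $B'$. Writing $B'=\alpha^{j_0}B$ and $B^k:=\alpha^k B$, I would decompose
\begin{equation*}
  2B'\setminus B=\bigcup_{k=0}^{j_0-1}(B^{k+1}\setminus B^k)\;\cup\;(2B'\setminus B').
\end{equation*}
On each annulus $B^{k+1}\setminus B^k$ the denominator satisfies $\lambda(c_B,d(x,c_B))\geq\lambda(c_B,\alpha^k r_B)$ by monotonicity, so the integral over that annulus is at most $\mu(B^{k+1})/\lambda(c_B,\alpha^k r_B)$.

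The key step is to exploit the \emph{non}-doubling of the intermediate balls, which by definition of $j_0$ gives $\mu(B^{k+1})>\beta\mu(B^k)$ for every $k<j_0$, hence iteratively
\begin{equation*}
  \mu(B^{k+1})\leq\beta^{-(j_0-k-1)}\mu(B'),\qquad k=0,\ldots,j_0-1.
\end{equation*}
For the denominator I would use the doubling property of $\lambda$ iterated $j_0-k$ times (exactly as in the proof of Lemma~\ref{lem:largeDoubling}) to get
\begin{equation*}
  \lambda(c_B,\alpha^{j_0}r_B)\leq C_\lambda^{(j_0-k)\log_2\alpha+1}\lambda(c_B,\alpha^k r_B),
\end{equation*}
combined with $\lambda(c_B,\alpha^{j_0}r_B)\geq\mu(B')$, yielding a lower bound on $\lambda(c_B,\alpha^k r_B)$ in terms of $\mu(B')$. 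Substituting into the ratio gives
\begin{equation*}
  \frac{\mu(B^{k+1})}{\lambda(c_B,\alpha^k r_B)}\leq C_\lambda\bigg(\frac{C_\lambda^{\log_2\alpha}}{\beta}\bigg)^{j_0-k-1}\cdot C_\lambda^{\log_2\alpha},
\end{equation*}
and because $\beta>C_\lambda^{\log_2\alpha}$ (this being precisely the hypothesis under which Lemma~\ref{lem:largeDoubling} supplied the doubling ancestor $B'$), summation over $k=0,\ldots,j_0-1$ yields a geometric series bounded independently of $j_0$.

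The last piece, the integral over $2B'\setminus B'$, is handled by the $(\alpha,\beta)$-doubling of $B'$ itself: since $\alpha\geq 2$ we have $2B'\subseteq\alpha B'$, so $\mu(2B')\leq\beta\mu(B')$, while $\lambda(c_B,d(x,c_B))\geq\lambda(c_B,\alpha^{j_0}r_B)\geq\mu(B')$ on this set, giving a bound of $\beta$. Adding the two contributions yields $I\leq C$, and \eqref{eq:regularity} then gives $|f_B-f_{B'}|\leq(1+C)\Norm{f}{\RBMO}$ as desired. The only subtle point is the matching of the two competing exponential rates, $\beta$ (the measure blow-up) against $C_\lambda^{\log_2\alpha}$ (the dominating-function blow-up); once one notices that the same inequality $\beta>C_\lambda^{\log_2\alpha}$ which guaranteed the existence of $B'$ also guarantees the summability here, the rest is bookkeeping.
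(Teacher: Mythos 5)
Your argument is correct and is essentially the paper's own proof: the same annular decomposition of $2B'\setminus B$ into the rings $\alpha^{k+1}B\setminus\alpha^k B$ plus $2B'\setminus B'$, the same use of the non-doubling of the intermediate balls against the iterated doubling of $\lambda$, and the same geometric series controlled by $\beta>C_\lambda^{\log_2\alpha}$. The only (immaterial) difference is in the outer ring, where the paper bounds $\mu(2B')\leq C_\lambda\lambda(c_B,\alpha^{j_0}r_B)$ directly by the doubling of $\lambda$ rather than invoking the $(\alpha,\beta)$-doubling of $B'$.
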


\begin{proof}
Denote $\gamma:=C_{\lambda}^{\log_2\alpha}$ so that $\beta>\gamma$ by assumption and
\begin{equation*}
  \lambda(x,\alpha^k r)=\lambda(x,2^{k\log_2\alpha}r)\leq C_{\lambda}^{k\log_2\alpha+1}\lambda(x,r)
  =C_{\lambda}\gamma^k\lambda(x,r).
\end{equation*}
Let $B'=\alpha^j B$. Then
\begin{equation*}
\begin{split}
  &\int_{2B'\setminus B}\frac{\ud\mu(x)}{\lambda(c_B,d(x,c_B))}
  \leq\int_{2B'\setminus B'}+\sum_{i=1}^j\int_{\alpha^i B\setminus \alpha^{i-1}B}\ldots \\
  &\leq\frac{\mu(2B')}{\lambda(c_B,\alpha^j r_B)}+\sum_{i=1}^j\frac{\mu(\alpha^i B)}{\lambda(c_B,\alpha^{i-1}r_B)} \\
  &\leq C_{\lambda}+\sum_{i=1}^jC_{\lambda}\frac{\beta^{i-j}\mu(\alpha^j B)}{\gamma^{j-i+1}\lambda(c_B,\alpha^j r_B)}
   \leq C_{\lambda}\Big\{1+\gamma\sum_{i=1}^j\big(\frac{\gamma}{\beta}\big)^{j-i}\Big\}\leq C_{\lambda}c(\beta,\gamma),
\end{split}
\end{equation*}
where the doubling property of $\lambda$ and the non-doubling property of the balls $\alpha^i B$, $i<j$, were used.
\end{proof}

\begin{lemma}\label{lem:neighbours}
For $f\in\RBMO(\mu)$, there holds $\abs{f_{B_1}-f_{B_2}}\leq C\Norm{f}{\RBMO}$ whenever
\begin{equation*}
  d(c(B_1),c(B_2))\leq C_1 \max\{r(B_1),r(B_2)\}\leq C_2\min\{r(B_1),r(B_2)\}.
\end{equation*}
\end{lemma}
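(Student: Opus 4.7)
My plan is the following: given $B_1$ and $B_2$ satisfying the hypothesis, I would introduce a single ``parent'' ball $B$ containing both, with radius comparable to each of $r(B_1)$ and $r(B_2)$, and then compare each $f_{B_i}$ to $f_B$ via the regularity condition \eqref{eq:regularity}, exploiting the logarithmic bound furnished by Remark~\ref{rem:regUpperBound}.

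Without loss of generality I would assume $r(B_1)\leq r(B_2)$. The hypothesis then yields $d(c(B_1),c(B_2))\leq C_1 r(B_2)$ and $r(B_2)\leq (C_2/C_1)r(B_1)$, so the two radii are comparable up to a factor depending only on $C_1$ and $C_2$. I would then take
\begin{equation*}
  B:=B\bigl(c(B_1),\,d(c(B_1),c(B_2))+r(B_2)\bigr).
\end{equation*}
By the triangle inequality, $B_2\subseteq B$, and $B_1\subseteq B$ since $r(B_1)\leq r(B_2)\leq r(B)$. Moreover
\begin{equation*}
  r(B)\leq (C_1+1)r(B_2)\leq (C_1+1)\tfrac{C_2}{C_1}r(B_1),
\end{equation*}
so the ratio $r(B)/r(B_i)$ is bounded by a constant $c=c(C_1,C_2)$ for $i=1,2$.

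Applying \eqref{eq:regularity} to the pair $B_i\subseteq B$ and invoking Remark~\ref{rem:regUpperBound} on the right-hand side, I would obtain
\begin{equation*}
  \abs{f_{B_i}-f_B}\leq \Norm{f}{\RBMO}\Big\{1+C_{\lambda}\log_2\frac{4r(B)}{r(B_i)}\Big\}\leq C\Norm{f}{\RBMO},\qquad i=1,2,
\end{equation*}
where $C$ depends only on $C_1,C_2,C_\lambda$. The triangle inequality $\abs{f_{B_1}-f_{B_2}}\leq\abs{f_{B_1}-f_B}+\abs{f_B-f_{B_2}}$ then finishes the proof.

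There is no real obstacle here; the argument is essentially bookkeeping. The only point that requires minor care is the degenerate situation in which $B$ coincides as a set with one of the $B_i$ (e.g.\ when $c(B_1)=c(B_2)$ and $r(B_1)=r(B_2)$), in which case one of the two comparisons is trivial, or the two balls themselves are equal and there is nothing to prove.
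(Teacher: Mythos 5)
Your proof is correct and follows essentially the same route as the paper's: both introduce a common enclosing ball of radius comparable to $r(B_1)$ and $r(B_2)$ (the paper uses $mB_1$, you use $B(c(B_1),d(c(B_1),c(B_2))+r(B_2))$), apply the regularity condition \eqref{eq:regularity} to each $B_i$ inside it, and bound the resulting integral by a constant via the doubling of $\lambda$. The only cosmetic difference is that you invoke the logarithmic bound of Remark~\ref{rem:regUpperBound}, whereas the paper estimates the integral in one step using $\lambda(c(B_2),d(x,c(B_2)))\geq\lambda(c(B_2),r(B_2))$ off $B_2$.
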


\begin{proof}
In the situation of the lemma, there holds $B_1\cup B_2\subseteq m B_1$ and $2m B_1\subseteq MB_2$ for constants $m,M$. Then
\begin{equation*}
  \abs{f_{B_1}-f_{B_2}}\leq\abs{f_{B_1}-mf_{B_1}}+\abs{mf_{B_1}-f_{B_2}},
\end{equation*}
and the second term is bounded by $C\Norm{f}{\RBMO}$ times
\begin{equation*}
\begin{split}
  \int_{2mB_1\setminus B_2}\frac{\ud\mu(x)}{\lambda(c(B_2),d(x,c(B_2)))}
  &\leq \frac{\mu(2m B_1)}{\lambda(c(B_2),r(B_2))}
  \leq \frac{\mu(M B_2)}{\lambda(c(B_2),r(B_2))} \\
  &\leq \frac{\lambda(c(B_2),M r(B_2))}{\lambda(c(B_2),r(B_2))}
  \leq C_{\lambda}^{\log_2 M+1}.
\end{split}
\end{equation*}
The estimate for $\abs{f_{B_1}-mf_{B_1}}$ is similar and slightly simpler, since the second step above is unnecessary then.
\end{proof}

\begin{lemma}\label{lem:aveVsConst}
For $f\in\RBMO(\mu)$ and every $(\alpha,\beta)$-doubling ball $B$, there holds
\begin{equation*}
  \abs{\ave{f}_B-f_B}\leq C\Norm{f}{\RBMO}.
\end{equation*}
\end{lemma}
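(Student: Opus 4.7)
The plan is to exploit the freedom in choosing $\varrho>1$ in the definition of $\RBMO(\mu)$, together with the doubling assumption on the specific ball $B$. The starting observation is the trivial identity
\begin{equation*}
  \ave{f}_B-f_B = \fint_B(f-f_B)\ud\mu,
\end{equation*}
so that a bound on $|\ave{f}_B-f_B|$ follows from estimating $\frac{1}{\mu(B)}\int_B|f-f_B|\ud\mu$.

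By Lemma~\ref{lem:varrho}, the space $\RBMO(\mu)$ does not depend on the parameter $\varrho>1$, and moreover the proof of that lemma shows that the \emph{same} constants $f_B$ can be used for any value of $\varrho$, with the norm changing only by a multiplicative constant. The plan is therefore to apply \eqref{eq:BMOrho} with $\varrho$ chosen equal to the doubling parameter $\alpha$ of the ball $B$. This yields
\begin{equation*}
  \frac{1}{\mu(\alpha B)}\int_B|f-f_B|\ud\mu \leq C\Norm{f}{\RBMO},
\end{equation*}
with a constant $C$ depending only on the original $\varrho$ and on $\alpha$.

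Combining these two displays and inserting the $(\alpha,\beta)$-doubling property $\mu(\alpha B)\leq\beta\mu(B)$ of the given ball $B$,
\begin{equation*}
  \abs{\ave{f}_B-f_B}\leq\frac{1}{\mu(B)}\int_B|f-f_B|\ud\mu
  =\frac{\mu(\alpha B)}{\mu(B)}\cdot\frac{1}{\mu(\alpha B)}\int_B|f-f_B|\ud\mu
  \leq C\beta\Norm{f}{\RBMO},
\end{equation*}
which is the claimed estimate. There is no real obstacle here; the only subtlety is the appeal to Lemma~\ref{lem:varrho} to match the parameter $\varrho$ in \eqref{eq:BMOrho} to the doubling scale $\alpha$ of $B$, since without such a match the estimate would involve $\mu(\varrho B)/\mu(B)$ for the original $\varrho$, which is not controlled when $\varrho<\alpha$.
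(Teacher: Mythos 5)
Your proof is correct and is essentially identical to the paper's: the paper likewise bounds $\abs{\ave{f}_B-f_B}\leq\fint_B\abs{f-f_B}\ud\mu\leq\frac{\mu(\alpha B)}{\mu(B)}\Norm{f}{\RBMO_{\alpha}}\leq\beta\Norm{f}{\RBMO_{\alpha}}$, with the switch to the parameter $\alpha$ justified by Lemma~\ref{lem:varrho} exactly as you describe. Your explicit remark that the same numbers $f_B$ serve for all values of the parameter is a point the paper leaves implicit, and it is indeed established at the end of the proof of Lemma~\ref{lem:varrho}.
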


\begin{proof}
\begin{equation*}
  \abs{\ave{f}_B-f_B}
  \leq\fint_B\abs{f-f_B}\ud\mu
  \leq\frac{\mu(\alpha B)}{\mu(B)}\Norm{f}{\RBMO_{\alpha}}\leq\beta\Norm{f}{\RBMO_{\alpha}}.
\end{equation*}
\end{proof}

\section{The John--Nirenberg inequality}\label{sec:JN}

Everything is now prepared for the main result, which is a simultaneous generalisation of the John--Nirenberg inequalities in Tolsa's $\RBMO(\mu)$ space on $\R^n$, and in the classical $\BMO(\mu)$ space on abstract homogeneous spaces (thanks to Proposition~\ref{prop:doublingBMO}). In the first setting, the inequality is due to Tolsa \cite{Tolsa:RBMO}, and reproven in \cite{NTV:Tb}. In the latter, already Coifman and Weiss \cite[p.~594, footnote]{CW:Hardy} pointed out that John and Nirenberg's proof ``can be adapted to spaces of homogeneous type,'' and explicit proofs can be found in \cite{Buckley,MacP,MMNO}.

\begin{proposition}
Let $(X,d,\mu)$ be geometrically doubling and upper doubling.
For every $\varrho>1$, there is a constant $c$ so that,
for every $f\in\RBMO(\mu)$ and every ball $B_0=B(x_0,r)$,
\begin{equation*}
  \mu(x\in B_0:\abs{f(x)-f_{B_0}}>t)\leq 2\mu(\varrho B_0)e^{-ct/\Norm{f}{\RBMO}}.
\end{equation*}
\end{proposition}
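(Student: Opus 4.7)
The plan is a recursive Calder\'on--Zygmund argument in the spirit of \cite{Tolsa:RBMO,NTV:Tb}, but executed with the basic covering theorem in place of Besicovitch. After normalizing $\Norm{f}{\RBMO}=1$, I would work with a quantity of the form
\begin{equation*}
F(t):=\sup\frac{\mu(\{x\in B:\abs{f(x)-f_B}>t\})}{\mu(\varrho B)},
\end{equation*}
the supremum ranging over balls $B$ (ideally already doubling in the appropriate sense, with the general case reduced to it via Lemma~\ref{lem:largeDoubling}) and admissible constants $f_B$. The trivial bound $F(0)\le 1$ together with a recursion $F(t+\lambda_0)\le\theta F(t)$, for some $\theta<1$ and a structural $\lambda_0$, then yields $F(t)\le\theta^{-1}\theta^{t/\lambda_0}$, equivalent to the claimed exponential decay.

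For the recursion, fix $B_0$ with an admissible $f_{B_0}$, a level $\lambda$ (large), and the bad set $E_\lambda:=\{x\in B_0:\abs{f(x)-f_{B_0}}>\lambda\}$. Pick a doubling parameter $\alpha$ large compared to $5$ and $\varrho$ and $\beta$ large enough so that Lemmas~\ref{lem:largeDoubling} and~\ref{lem:smallDoubling} apply. By Corollary~\ref{cor:differentiation}, for $\mu$-a.e.\ $x\in E_\lambda$ arbitrarily small $(\alpha,\beta)$-doubling balls $B\ni x$ satisfy $\fint_B\abs{f-f_{B_0}}\ud\mu>\lambda$, while Lemma~\ref{lem:largeDoubling} produces doubling enlargements on which this average drops below $\lambda$ once $\lambda$ exceeds a structural threshold. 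For each such $x$ I would select a \emph{stopping} $(\alpha,\beta)$-doubling ball $B_x\ni x$ that is bad (average of $\abs{f-f_{B_0}}$ exceeds $\lambda$) but whose next doubling ancestor is good; combining this with Lemmas~\ref{lem:doublingAncestor} and~\ref{lem:aveVsConst} yields $\abs{f_{B_x}-f_{B_0}}\le C\lambda$. The basic covering theorem then extracts a disjoint countable subfamily $\{B_i\}$ such that $\{5B_i\}$ covers $E_\lambda$ up to $\mu$-null sets. Disjointness and the bad-average property give, via the $\RBMO$ definition,
\begin{equation*}
\lambda\sum_i\mu(B_i)\le\sum_i\int_{B_i}\abs{f-f_{B_0}}\ud\mu\le C\mu(\varrho B_0),
\end{equation*}
and, with $\alpha$ chosen so that $5B_i$ is itself $(\varrho,\beta')$-doubling, $\sum_i\mu(\varrho\cdot 5B_i)\le C\mu(\varrho B_0)/\lambda\le\theta\mu(\varrho B_0)$ for $\lambda$ large.

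To iterate, Lemma~\ref{lem:neighbours} applied to the doubling pair $B_i\subset 5B_i$ gives $\abs{f_{5B_i}-f_{B_0}}\le C\lambda$, so that $\abs{f-f_{B_0}}>t+C\lambda$ on $5B_i$ forces $\abs{f-f_{5B_i}}>t$, hence
\begin{align*}
\mu(\{x\in B_0:\abs{f-f_{B_0}}>t+C\lambda\})
&\le\sum_i\mu(\{x\in 5B_i:\abs{f-f_{5B_i}}>t\})\\
&\le F(t)\sum_i\mu(\varrho\cdot 5B_i)\le\theta F(t)\mu(\varrho B_0),
\end{align*}
which, after taking the supremum, closes the recursion with $\lambda_0=C\lambda$. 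The principal obstacle, already flagged in the introduction, is precisely the absence of Besicovitch: covering $E_\lambda$ only by the $5$-dilates of disjoint stopping balls forces one to prescribe the doubling parameter $\alpha$ consistently so that both $B_i$ and $5B_i$ lie in the doubling regime in which Lemmas~\ref{lem:doublingAncestor}--\ref{lem:aveVsConst} yield uniform control, and to absorb the geometric gap between $B_i$ and $5B_i$ via the regularity of admissible constants so that the final recursion closes with $\theta<1$. A secondary technical point is that $B_0$ itself may fail to be doubling, which I would handle by taking the supremum defining $F$ over doubling balls only and then reducing the general case at the end through a doubling enlargement provided by Lemma~\ref{lem:largeDoubling}.
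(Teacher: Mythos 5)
Your proposal is correct and follows essentially the same route as the paper's proof: a stopping-time selection of small $(\alpha,\beta)$-doubling balls on which the oscillation from $f_{B_0}$ first exceeds the threshold (controlled via Corollary~\ref{cor:differentiation} and Lemmas~\ref{lem:doublingAncestor}, \ref{lem:neighbours}, \ref{lem:aveVsConst}), the basic covering theorem to pass to disjoint $B_i$ with $5B_i$ covering the bad set, and the key choice $\alpha=5\varrho$ so that $\mu(\varrho\cdot 5B_i)=\mu(\alpha B_i)\leq\beta\mu(B_i)$ closes the recursion. The only cosmetic difference is that you phrase the iteration as a functional inequality $F(t+\lambda_0)\leq\theta F(t)$, whereas the paper iterates the covering construction directly on the balls $B^{i_1,\ldots,i_n}$; the paper also shows that no restriction of the supremum to doubling $B_0$ is needed.
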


\begin{proof}
Let $\alpha:=5\varrho$, let $\beta$ be large enough as required in Lemmas~\ref{lem:largeDoubling} and \ref{lem:smallDoubling}, and let $L$ be a large constant to be chosen. For every $x\in B_0$, let $B_x'$ be the maximal $(\alpha,\beta)$-doubling ball of the form $B_x'=B(x,\alpha^{-i}r)$, $i\in\N$, such that
\begin{equation*}
  B_x'\subseteq\sqrt{\varrho}B_0,\quad\text{and}\quad\abs{f_{B_x'}-f_{B_0}}>L,
\end{equation*}
if any exist. Note that if $\abs{f(x)-f_{B_0}}>2L$, then there exist (by Corollary~\ref{cor:differentiation}) arbitrarily small doubling balls $B=B(x,\alpha^{-i}r)$ such that $\abs{\ave{f}_B-f_{B_0}}>2L$, and hence
\begin{equation*}
  \abs{f_B-f_{B_0}}> 2L-\abs{\ave{f}_B-f_B}>L,
\end{equation*}
provided that $L\geq C\Norm{f}{\RBMO}$ (using Lemma~\ref{lem:aveVsConst}). Thus for all $x\in B_0$ with $\abs{f(x)-f_{B_0}}>2L$, a ball $B_x'$ will be found. Observe further that
\begin{equation}\label{eq:aveLarge}
  \fint_{B_x'}\abs{f-f_{B_0}}\ud\mu
  \geq\abs{\ave{f}_{B_x'}-f_{B_0}}>L-\abs{\ave{f}_{B_x'}-f_{B_x'}}\geq L-C\Norm{f}{\RBMO}>L/2
\end{equation}
by Lemma~\ref{lem:aveVsConst}, again provided that $L\geq C\Norm{f}{\RBMO}$.

From the maximality of $B_x'$ it follows that $B_x'':=(\alpha B_x')'$ (the minimal $(\alpha,\beta)$-doubling ball of the form $\alpha^iB_x'$, $i\in\Z_+$) satisfies
\begin{equation*}
  B_x''\not\subseteq\sqrt{\varrho}B_0,\quad\text{or}\quad\abs{f_{B_x''}-f_{B_0}}\leq L,
\end{equation*}
In the first case, let $\alpha^i B_x'$, $i\in\Z_+$ be the smallest expansion of $B_x'$ with $\alpha^i B_x'\not\subseteq\sqrt{\varrho}B_0$, so that $r(\alpha^i B_x')\eqsim r(B_0)$ and $B_x''=(\alpha^i B_x')'$. Hence
\begin{equation*}
  \abs{f_{B_x''}-f_{B_0}}\leq\abs{f_{(\alpha^i B_x')'}-f_{\alpha^i B_x'}}
   +\abs{f_{\alpha^i B_x'}-f_{B_0}}\leq C\Norm{f}{\RBMO}
\end{equation*}
by Lemmas~\ref{lem:doublingAncestor} and \ref{lem:neighbours}. But this means that in fact $\abs{f_{B_x''}-f_{B_0}}\leq L$ in any case, provided that $L\geq C\Norm{f}{\RBMO}$. Hence
\begin{equation*}
  L<\abs{f_{B_x'}-f_{B_0}}\leq\abs{f_{B_x'}-f_{B_x''}}+\abs{f_{B_x''}-f_{B_0}}
   \leq C\Norm{f}{\RBMO}+L\leq 3L/2,
\end{equation*}
again provided that $L\geq C\Norm{f}{\RBMO}$.
 
Among the balls $B_x'$, one now chooses disjoint $B_i$, $i\in I$, so that the expanded balls $5B_i$ cover all the original $B_x'$. This is again an application of the basic covering theorem \cite[Theorem 1.2]{Heinonen:book}. If $x\in 5B_i$ and $\abs{f(x)-f_{B_0}}>nL$, then
\begin{equation*}
  \abs{f(x)-f_{5B_i}}
  \geq\abs{f(x)-f_{B_0}}-\abs{f_{B_0}-f_{B_i}}-\abs{f_{B_i}-f_{5B_i}}
  >nL-3L/2-C\Norm{f}{\RBMO}\geq (n-2)L
\end{equation*}
if $L\geq C\Norm{f}{\RBMO}$. For $n\geq 2$, it thus follows that
\begin{equation*}
\begin{split}
  \{x\in B_0:\abs{f(x)-f_{B_0}}>nL\}
  &\subseteq\bigcup_{\ontop{x\in B_0:}{\abs{f(x)-f_{B_0}}>nL}}\{y\in B_x':\abs{f(y)-f_{B_0}}>nL\} \\
  &\subseteq\bigcup_{i\in I}\{y\in 5B_i:\abs{f(y)-f_{5B_i}}>(n-2)L\}.
\end{split}
\end{equation*}

Using \eqref{eq:aveLarge} and the fact that the balls $B_i=B_{x_i}'$ are $(\alpha,\beta)$-doubling, disjoint, and contained in $\sqrt{\varrho}B_0$, it follows that
\begin{equation*}
\begin{split}
  \sum_{i\in I}\mu(\varrho\cdot 5B_i)
  &=\sum_{i\in I}\mu(\alpha B_i)
  \leq\beta\sum_{i\in I}\mu(B_i)
  \leq\frac{C}{L}\sum_{i\in I}\int_{B_i}\abs{f-f_{B_0}}\ud\mu \\
  &\leq\frac{C}{L}\int_{\sqrt{\varrho}B_0}\abs{f-f_{B_0}}\ud\mu \\
  &\leq\frac{C}{L}\Big(\int_{\sqrt{\varrho}B_0}\abs{f-f_{\sqrt{\varrho}B_0}}\ud\mu
    +\mu(\sqrt{\varrho}B_0)\abs{f_{\sqrt{\varrho}B_0}-f_{B_0}}\Big) \\
  &\leq\frac{C}{L}\Big(\mu(\sqrt{\varrho}\cdot\sqrt{\varrho}B_0)\Norm{f}{\RBMO_{\sqrt{\varrho}}}
     +\mu(\sqrt{\varrho}B_0)\Norm{f}{\RBMO_{\sqrt{\varrho}}}\Big) \\
  &\leq\frac{C}{L}\Norm{f}{\RBMO}\mu(\varrho B_0)\leq\frac{1}{2}\mu(\varrho B_0),
\end{split}
\end{equation*}
given that $L\geq C\Norm{f}{\RBMO}$.

Writing $B^i:=5B_i$, the above results can be summarised as
\begin{equation*}
\begin{split}
  \{x\in B_0:\abs{f(x)-f_{B_0}}>nL\}
  &\subseteq\bigcup_{i\in I}\{x\in B^i:\abs{f(x)-f_{B^i}}>(n-2)L\}, \\
  \sum_{i\in I}\mu(\varrho B^i) &\leq\frac{1}{2}\mu(\varrho B_0).
\end{split}
\end{equation*}
This contains the essence of the matter, for now one can iterate with the balls $B^i$ in place of $B_0$, to the result that
\begin{equation*}
\begin{split}
  \{x\in B_0:\abs{f(x)-f_{B_0}}>2nL\}
  &\subseteq\bigcup_{i_1}\{x\in B^{i_1}:\abs{f(x)-f_{B^{i_1}}}>2(n-1)L\}\\
  &\subseteq\bigcup_{i_1,i_2}\{x\in B^{i_1,i_2}:\abs{f(x)-f_{B^{i_1,i_2}}}>2(n-2)L\}\subseteq\ldots\\
  &\subseteq\bigcup_{i_1,i_2\ldots,i_n}\{x\in B^{i_1,i_2,\ldots,i_n}:\abs{f(x)-f_{B^{i_1,i_2\ldots,i_n}}}>0\},
\end{split}
\end{equation*}
and then
\begin{equation*}
\begin{split}
  \mu(x\in B_0:\abs{f(x)-f_{B_0}}>2nL)
  &\leq \sum_{i_1,\ldots,i_{n-1},i_n}\mu(B^{i_1,\ldots,i_{n-1},i_n}) \\
  &\leq \sum_{i_1,\ldots,i_{n-1}}\sum_{i_n}\mu(\varrho B^{i_1,\ldots,i_{n-1},i_n}) \\
  &\leq \sum_{i_1,\ldots,i_{n-1}}\frac{1}{2}\mu(\varrho B^{i_1,\ldots,i_{n-1}})\leq\ldots
   \leq \frac{1}{2^n}\mu(\varrho B_0).
\end{split}
\end{equation*}

Recall that one can take $L=C\Norm{f}{\RBMO}$, and choose $n\in\N$ so that $2nL\leq t<2(n+1)L$. Thus
\begin{equation*}
\begin{split}
  \mu(x\in B_0:\abs{f(x)-f_{B_0}}>t)
  &\leq \mu(x\in B_0:\abs{f(x)-f_{B_0}}>2nL) \\
  &\leq 2^{-n}\mu(\varrho B_0) \leq 2^{-(2L)^{-1}t+1}\mu(\varrho B_0)
   = 2e^{-ct/\Norm{f}{\RBMO}}\mu(\varrho B_0),
\end{split}  
\end{equation*}
and this completes the proof.
\end{proof}

The familiar corollary follows in the usual way, and is left as an exercise:

\begin{corollary}
Let $(X,d,\mu)$ be geometrically doubling and upper doubling.
For every $\varrho>1$ and $p\in[1,\infty)$, there is a constant $C$ so that,
for every $f\in\RBMO(\mu)$ and every ball $B_0$,
\begin{equation*}
  \Big(\frac{1}{\mu(\varrho B_0)}\int_{B_0}\abs{f-f_{B_0}}^p\ud\mu\Big)^{1/p}
  \leq C\Norm{f}{\RBMO}.
\end{equation*}
\end{corollary}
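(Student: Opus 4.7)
The plan is to deduce the $L^p$ estimate from the John--Nirenberg inequality of the preceding proposition by the classical layer-cake (distribution function) argument. Fix $\varrho>1$ and $p\in[1,\infty)$, and let $c=c(\varrho)$ denote the constant furnished by the proposition. For the given ball $B_0$, apply the proposition directly with the same parameter $\varrho$ to obtain
\begin{equation*}
  \mu(\{x\in B_0:\abs{f(x)-f_{B_0}}>t\})\leq 2\mu(\varrho B_0)e^{-ct/\Norm{f}{\RBMO}}\qquad\text{for all }t>0.
\end{equation*}

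Next I rewrite the $L^p$ integral by integration of the distribution function:
\begin{equation*}
  \int_{B_0}\abs{f-f_{B_0}}^p\ud\mu
   = p\int_0^\infty t^{p-1}\mu(\{x\in B_0:\abs{f(x)-f_{B_0}}>t\})\ud t.
\end{equation*}
Substituting the John--Nirenberg bound and pulling out $\mu(\varrho B_0)$ reduces the right-hand side to a gamma integral:
\begin{equation*}
  \int_{B_0}\abs{f-f_{B_0}}^p\ud\mu
  \leq 2p\mu(\varrho B_0)\int_0^\infty t^{p-1}e^{-ct/\Norm{f}{\RBMO}}\ud t
  = 2p\,\Gamma(p)\Big(\frac{\Norm{f}{\RBMO}}{c}\Big)^{p}\mu(\varrho B_0).
\end{equation*}
Dividing by $\mu(\varrho B_0)$ and taking the $p$-th root yields the stated inequality with $C=(2p\,\Gamma(p))^{1/p}/c$, which depends only on $\varrho$, $p$, and the structural constants of the space.

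There is essentially no obstacle here; the only points to check are that the distributional identity is valid (it is, since $f-f_{B_0}\in L^1_{\loc}(\mu)$ and the distribution function on $B_0$ is finite by the John--Nirenberg bound) and that the constant $c$ delivered by the proposition is the same one appearing in the exponent of the final estimate. The case $p=1$ is a direct consequence of \eqref{eq:BMOrho} together with the observation that $\Norm{\cdot}{\RBMO_\varrho}\eqsim\Norm{\cdot}{\RBMO}$ by Lemma~\ref{lem:varrho}, so the layer-cake route is needed only for $p>1$.
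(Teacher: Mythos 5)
Your layer-cake argument is correct and is precisely the ``usual way'' the paper alludes to when leaving this corollary as an exercise: the John--Nirenberg bound holds for all $t>0$, the distributional identity is valid for the nonnegative measurable function $\abs{f-f_{B_0}}$ on $B_0$, and the gamma integral yields $C=(2p\,\Gamma(p))^{1/p}/c$ as you state. No gaps; the closing remark about $p=1$ is harmless but unnecessary, since the exponential bound already handles that case.
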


%\begin{thebibliography}{9}
%\end{thebibliography}

\bibliographystyle{plain}%{amsalpha}%{ams-pln}%
\bibliography{metric}

\end{document}